\newtheorem{Theorem}{Theorem}[section]
\newtheorem{Lemma}[Theorem]{Lemma}
\newtheorem{Corollary}[Theorem]{Corollary}
\newtheorem{Proposition}[Theorem]{Proposition}
\newtheorem{Definition}[Theorem]{Definition}
\newtheorem{Example}[Theorem]{Example}
\def \dim{{\mbox {dim}}\,}
\def\V{\mbox{Var}}
\def\R\re
\def\V{\bf V}
\def \re{{\mathbb R}}
\def \0{\lambda_{0}}
\begin{document}
\title[Isoparametric hypersurfaces]{Isoparametric hypersurfaces  and metrics of constant scalar curvature}

\author[G. Henry]{Guillermo Henry}\thanks{G. Henry is supported
by a postdoctoral fellowship of CONICET}
  \address{Departamento de Matem\'atica, FCEyN, Universidad de Buenos 
Aires, Ciudad Universitaria, Pab. I., C1428EHA,
           Buenos Aires, Argentina.}
\email{ghenry@dm.uba.ar}

\author[J. Petean]{Jimmy Petean}\thanks{J. Petean is supported
by grant 106923-F of CONACYT}
 \address{CIMAT  \\
          A.P. 402, 36000 \\
          Guanajuato. Gto. \\
          M\'exico \\
           and Departamento de Matem\'{a}ticas, FCEyN \\
          Universidad de Buenos Aires, Argentina (on leave).}
\email{jimmy@cimat.mx}

\subjclass{53C21}

\date{}


\begin{abstract} We showed the existence of  non-radial solutions of the equation
$\Delta u -\lambda u + \lambda u^q =0$ on the round sphere $S^m$, for $q<(m+2)/(m-2)$, 
and study the number of
such solutions in terms of $\lambda$. We show that for any isoparametric hypersurface 
$M\subset S^m$ there are solutions such that $M$ is a regular
level set (and the number of such solutions increases with $\lambda$). 
We also show similar results for isoparametric hypersurfaces in general
Riemannian manifolds.
These solutions give multiplicity results
for metrics of constant scalar curvature on conformal classes of Riemannian products.

\end{abstract}

\maketitle

\section{Introduction}Given a  Riemannian metric $g$  on a closed manifold $M^n$ 
we denote by $[g]$ the family of metrics conformal to $g$. The
classical Yamabe problem consists of finding metrics of constant scalar curvature in $[g]$. 
To study this problem H. Yamabe considered \cite{Yamabe} what we will now call
 the Yamabe constant of $[g]$ and denote by $Y(M,[g])$, the infimum
of the (normalized) total scalar curvature functional restricted to $[g]$:

$$Y(M,[g])  = \inf_{h\in [g]} \ \frac{ \int_M  s_h \  dvol_h }{ Vol(M,h)^{\frac{n-2}{n}}}.$$

\noindent
Here $s_h$ and $dvol_h$ denote the scalar curvature and volume element of $h$.

The critical points of the total scalar curvature functional restricted to $[g]$ are the metrics in
$[g]$ which have constant scalar curvature. Yamabe attempted to prove the existence
constant scalar curvature metrics in $[g]$ by showing that $Y(M,[g])$ is realized.
His proof contained a mistake but his statement was eventually proved to be correct
in a series of beautiful articles by N. Trudinger \cite{Trudinger}, T. Aubin \cite{Aubin}
and R. Schoen \cite{Schoen}.  It is not difficult to see that in $[g]$ there cannot exist two metrics
of constant scalar curvatures of different signs. Moreover, if there is a metric $h\in [g]$ of
constant non-positive scalar curvature then any other metric in $[g]$ of constant scalar curvature
is of the form $k g$ for some $k\in {\re}_{>0}$. 
But if $Y(M,[g]) >0$ the situation is much more interesting. For instance S. Brendle showed in
 \cite{Brendle} that in high dimensional spheres
there are conformal classes of metrics (different form the conformal class of the round metric)
for which the space of unit volume constant
scalar curvature metrics is a non-compact family.  
Finding metrics of constant scalar curvature in a conformal class $[g]$ amounts
to solving what is called the {\it Yamabe equation} for $(M,g)$: if we let $p= p_n =2n/(n-2)$ then $h=u^{p-2} g$
has constant scalar curvature $\lambda$ if and only if 

$$-a_n \Delta_g u + s_g u = \lambda u^{p-1} ,$$

\noindent
where $a_n =4(n-1)/(n-2)$. 
Expressing any metric in $h\in [g]$ as $h=u^{p-2} g$ the total scalar curvature functional
restricted to $[g]$ becomes

$$u\mapsto  \frac{\int_M a_n {\| \nabla u \| }^2  +s_g u^2  \ dvol_g }{ {\| f \|}_p^2 }$$

\noindent
and the Yamabe equation is the Euler-Lagrange equation of this functional.  To find all
solutions of the Yamabe equation in a fixed conformal class of positive Yamabe constant
is an extremely difficult problem.
One does have uniqueness of solutions in the case
of the conformal class of an Einstein metric by a classical result of M. Obata \cite{Obata},
and this  covers a big family of interesting examples. 
The only non-trivial
case which is completely understood is the case of cylinders $(S^n \times S^1 ,[g_0^n + 
T^2 dt^2 ])$: here (and in the rest of this article) $g_0^n$ denotes the round metric of
sectional curvature 1 on the $n$-sphere, $dt^2$ is the metric of diameter 1 in the circle and 
$T\in \re_{>0}$. This case was studied by R. Schoen \cite{Schoen2} and O. Kobayashi
\cite{Kobayashi, Kobayashi2}: in this case all solutions are constant along the spheres
$S^n$ and the number of solutions increase with $T$.

The principal interest in this article is to study multiplicity results for solutions of
the Yamabe equation  on the  products $(S^n \times S^k , 
[g_0^n + T^2 g_0^k ])$, for $n,k\geq 2$.  This problem was already considered
by the second author in \cite{Petean}, where it is studied the number of solutions 
 which depend only on the first variable, and are radial
(i.e. invariant by the canonical $SO(n)$-action which leaves the poles fixed). The
same type of results had actually been obtained previously (and in a  more
complete way) by Q. Jin, Y. Y. Li and H. Xu
\cite[Theorem 1.3]{Jin} in another context.

In this article we will show  the existence
of non-radial solutions of the equation. Since $2<p_{n+k} <p_n$ it is enough to
study solutions of the subcritical Yamabe equation on $S^n$: 

 \begin{equation}\label{SY} 
 - \Delta_{g_0} u + \lambda  u = \lambda u^q
 \end{equation}

\noindent
where $\lambda $ is a positive constant (related to the scalar curvature of the product manifold) 
and $1<q<p_n -1$. This equation has already been studied by several authors.
Of particular interest to us are the articles
\cite{Jin} that we just mentioned and the article \cite{Veron}, where M-F.Bidaut-Veron and
L. Veron show that the constant $u\equiv 1$ is the only solution of the equation if
$\lambda \leq n/(q-1)$. We will actually make use of this result later.

\vspace{.2cm}

To discuss our main results let us recall that
if $(M,g)$ is a  Riemannian manifold a smooth function $f:M \rightarrow \re$ is called an 
{\it isoparametric function} if there exist  a smooth function $a$ and a continuous function $b$
such that

$$\| \nabla (f) \|^2 = b \circ f $$

$$ \Delta_{g} (f) = a\circ f .$$  

Regular level sets of isoparametric fuctions are called {\it isoparametric hypersurfaces}.
The study of isoparametric hypersurfaces has a long history.  E. Cartan
\cite{Cartan} proved that in 
the case of space forms a hypersurface $M$ is isoparametric (according to the previous 
definition) if and only if it has constant principal curvatures.
For instance, isoparametric hypersurfaces in Euclidean space $\re^n$
are just the canonical embeddings of $S^{n-1}$, $\re^{n-1}$ or $S^k \times \re^{n-k-1}$
as was shown by T. Levi-Civita \cite{Levi-Civita} (in the three-dimensional case)
and B. Segre \cite{Segre}. But the situation is much more interesting 
in the case of the round sphere. The simplest examples are the regular orbits of codimension one
isometric actions on the sphere. These are called {\it homogeneous} and include many  
interesting examples;  a classification of homogeneous isoparametric hypersurfaces in
the sphere was given by
W. Y. Hsiang and H. B- Lawson \cite{Lawson}. 
But there are families of examples of non-homogeneous isoparametric hypersurfaces.
The first examples were found by H. Ozeki and M. Takeuchi \cite{Ozeki-Takeuchi-I, Ozeki-Takeuchi-II}.
Although many results have been obtained towards a classification of isoparametric
hypersurfaces in the spheres (see for instance the articles by T. Cecil, Q. S. Chi and G. Jensen \cite{Cecil},
S. Immervoll \cite{Immervoll} and R. Miyaoka \cite{Miyaoka})
a complete classification is still missing. 

We will show that given any isoparametric hypersurface $S \subset S^n$ and any $k\geq 2$,
for any positive $T$ such that 

$$\frac{1}{T} > \frac{6(n+5)(n+k-1) - n(n-1)}{k(k-1)} ,$$ 

\noindent
there exists a solution $u$ of the Yamabe equation on
$(S^n \times S^k , g_0^n + T g_0^k )$ so that $S \times S^k$ is a level surface
of $u$. Therefore to understand  all solutions to the Yamabe equation in Riemannian  products
of spheres  one needs to classify  all isoparametric hypersurfaces of the sphere; and the
previous comments then give us an idea of the complexity of the problem.

To state our results more precisely, given an isoparametric hypersurface
$S \subset S^n$ we let $l$ be number of distinct principal curvatures of $S$.
Then $l$ can only be 1, 2, 3, 4
or 6 as follows from the work of H. F. M\"unzner \cite{Munzner, Munzner2}. Moreover,
there is an isoparametric function $f$ associated to $S$ (i.e. $S$ is a regular level
set of $f$) which is the restriction to $S^n$ of a homogeneous polynomial $F$ in
$\re^{n+1}$ of degree $l$.
In this situation we will say that $S$ has degree $l$. 
Let  $\lambda_{i,n}  = -i(n+i-1)$ (these are the eigenvalues of the Laplace operator
 on $S^n$) and 
 
 $$\lambda_{i,n}^{l,q} = \frac{-\lambda_{il,n} }{q-1}.$$

 Note that for $l, q, n$ fixed the sequence is increasing in $i$.
 We will prove

\begin{Theorem}  Let $S$ be an isoparametric hypersurface of $S^n$ of degree
$l$. Then for each positive integer $i$ there exist at least $i$  solutions of 
equation (1) on $S^n$ for $\lambda \in (\lambda^{l,q}_{i,n}  \ , \ \lambda^{l,q}_{i+1,n} ]$
which are constant along $S$.
\end{Theorem}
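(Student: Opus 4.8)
The plan is to reduce the partial differential equation to a one-dimensional problem, using that a solution constant along $S$ must be a function of the associated isoparametric function $f$, and then to count solutions of the reduced problem through its variational structure together with the spectrum of the linearization at the constant solution $u\equiv 1$. Concretely, write $u=v\circ f$, where $f=F|_{S^n}$ is the isoparametric function of degree $l$ associated to $S$, with $\|\nabla f\|^2=b\circ f$ and $\Delta_{g_0}f=a\circ f$. By the chain rule $\Delta_{g_0}(v\circ f)=(v''\circ f)(b\circ f)+(v'\circ f)(a\circ f)$, so equation (1) for $u=v\circ f$ is equivalent to the ordinary differential equation
$$-b(t)\,v''(t)-a(t)\,v'(t)+\lambda v(t)=\lambda\, v(t)^q$$
on the interval $I=f(S^n)=[t_-,t_+]$, together with the natural Neumann conditions at the endpoints $t_\pm$ (where the level sets collapse onto the focal submanifolds and the weight degenerates). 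Using the coarea formula with $V(t)=\mathrm{Vol}(f^{-1}(t))$, this ODE is the Euler--Lagrange equation of the weighted functional
$$E_\lambda(v)=\tfrac12\int_I\big(b\,V\,|v'|^2+\lambda\,V\,|v|^2\big)\,dt-\frac{\lambda}{q+1}\int_I V\,|v|^{q+1}\,dt$$
on the weighted Sobolev space $H$ of functions of $f$. I would first verify that solutions of the ODE with these boundary conditions lift to genuine smooth solutions on $S^n$ (regularity across the focal submanifolds), so that the counting problem becomes the counting of nonconstant critical points of $E_\lambda$.

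\emph{The spectrum of the linearization.} Linearizing (1) at $u\equiv 1$ gives $-\Delta_{g_0}w=\lambda(q-1)\,w$, so bifurcation from the constant solution can occur only when $\lambda(q-1)$ equals an eigenvalue of $-\Delta_{g_0}$ admitting an eigenfunction that is a function of $f$. The crucial structural point is that the eigenvalues of $-\Delta_{g_0}$ restricted to functions of $f$ are exactly $-\lambda_{il}=il(n+il-1)$, $i=0,1,2,\dots$. The case $i=1$ is the Cartan--M\"unzner identity $\Delta_{g_0}f=c-l(n+l-1)f$, which says that $f$ minus a constant is an eigenfunction with eigenvalue $\lambda_l$; the general case follows because the functions of $f$ lying in the eigenspace of $-\lambda_j$ are nonzero precisely when $j\in l\,\mathbb{Z}_{\ge 0}$ (equivalently, the $i$-th Sturm--Liouville eigenfunction of the operator above is a degree-$i$ polynomial in $f$ whose leading ambient part is a harmonic polynomial of degree $il$). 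This is exactly what produces the thresholds $\lambda^{l,q}_i=-\lambda_{il}/(q-1)$.

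\emph{Multiplicity.} Since $q<p_n-1$ the nonlinearity is subcritical, and the reduction to one dimension makes the embedding $H\hookrightarrow L^{q+1}(I,V\,dt)$ compact, so $E_\lambda$ satisfies the Palais--Smale condition on $H$. The constant $v\equiv 1$ is a critical point, and by the previous step its Morse index in $H$ equals the number of thresholds already crossed: for $\lambda\in(\lambda^{l,q}_i,\lambda^{l,q}_{i+1}]$ the modes corresponding to $-\lambda_l,\dots,-\lambda_{il}$ are unstable, so the index is $i$. I would then run a min--max argument on the symmetric Nehari manifold, using the cohomological index to produce $i$ distinct critical levels and arguing that the associated critical points are nonconstant and pairwise distinct; alternatively one obtains $i$ global bifurcating branches by Rabinowitz's theorem, each crossing being transversal because the eigenvalues $-\lambda_{il}$ are simple in $H$.

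\emph{Main obstacle.} I expect two points to require the most care. The first is structural but essential: proving that the eigenvalues of $-\Delta_{g_0}$ on functions of $f$ are precisely $-\lambda_{il}$, which is what links the degree $l$ to the spacing of the bifurcation thresholds; this rests on M\"unzner's description of isoparametric functions via the Cartan--M\"unzner polynomial. The second is analytic: establishing the Palais--Smale condition and extracting exactly $i$ pairwise distinct, nonconstant solutions from the min--max scheme, while controlling the regularity of the ODE solutions across the focal submanifolds so that they define smooth functions on all of $S^n$.
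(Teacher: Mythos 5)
Your reduction to a one-dimensional problem and your identification of the bifurcation thresholds are consistent with the paper (the spectral fact you assert is the paper's Lemma 3.4, proved there by an explicit ODE argument plus Sturm comparison). The gap is in the multiplicity step. Your primary route --- min--max on a ``symmetric Nehari manifold'' with the cohomological index --- does not apply here: the functional $E_\lambda$ has no $\mathbb{Z}_2$-symmetry about the constant solution $u\equiv 1$, which is the critical point whose instability you are exploiting (the symmetry $v\mapsto -v$ is centered at $0$, while the solutions to be counted are positive, so they do not come in symmetric pairs). Knowing that the Morse index of $u\equiv 1$ equals $i$ does not by itself produce $i$ distinct nonconstant critical points; without an additional invariant separating solutions, a single nonconstant critical point could carry all the topology forced by the index. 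This counting problem is exactly the difficulty the paper's proof is organized to overcome.

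Your alternative route (global Rabinowitz branches) is the paper's approach, but as stated it omits the three ingredients that make the count work. Rabinowitz's theorem only says that each branch $C_j$ emanating from $(0,\mu_j)$, $\mu_j=1+jl(n+jl-1)$, is either non-compact or returns to another bifurcation point. The paper (i) rules out the second alternative and (ii) distinguishes the branches at a fixed $\lambda$ by a nodal invariant: for a solution $u=\varphi\circ f$, the number of times $\varphi$ crosses the value $1$ is finite and every crossing is transversal (because $\varphi$ solves a second-order ODE and $\varphi\not\equiv 1$), so this number is locally constant on the set of nontrivial solutions; near $(0,\mu_j)$ it equals $j$ because, by Crandall--Rabinowitz, the local branch is tangent to $p_j\circ f$ and $p_j$ has exactly $j$ simple zeros in $(-1,1)$ --- this zero count is part of Lemma 3.4 and is absent from your spectral discussion, yet it is what links the branch label to a quantity that distinguishes solutions. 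Finally (iii), non-compactness of $C_j$ alone does not yield solutions for every $\lambda>\lambda_j^{l,q}$: one needs the a priori bounds of Jin--Li--Xu (solutions with bounded $\lambda$ are uniformly bounded and bounded away from $-1$) together with the Bidaut-Veron--Veron nonexistence of nontrivial solutions for $\lambda\le n/(q-1)$, so that the only way for $C_j$ to be non-compact is to reach arbitrarily large $\lambda$. With (i)--(iii), the branches $C_1,\dots,C_i$ supply $i$ pairwise distinct nonconstant solutions for each $\lambda\in(\lambda_i^{l,q},\lambda_{i+1}^{l,q}]$. Incidentally, the paper works directly in $C^{2,\alpha}(S_f)\subset C^{2,\alpha}(S^n)$, which sidesteps the boundary-regularity question at the focal submanifolds that your degenerate Neumann ODE formulation would have to resolve.
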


In terms of solutions of the Yamabe equation in $(S^n \times S^k , g_0^n + T g_0^k )$,
$\lambda$ corresponds to $(1/a_{n+k}) (n(n-1)+(1/T) k(k-1))$. To be more explicit 
about what the theorem is saying in terms of multiplicity results for metrics of
constant scalar curvature let us first consider the case $n=k=3$. In $S^3$ there are
two types of isoparametric hypersurfaces, corresponding to the canonical 
isometric actions of $SO(3)$ and of $SO(2)\times SO(2)$: the first ones have $l=1$
and for the second ones $l=2$. Note that $a_6 = 5$ and $p_6 =3$. Let $\mu_i = -i(i+2)$
be the eigenvalues of the Laplace operator on $S^3$  and let 
$T_i = 6/(-5 \mu_i -6 )$ for $i\geq 1$, $T_0 =1$. Then Theorem 1.1 translates to:

\begin{Corollary}  For $T\in [ T_{i+1}, T_i )$ there are $i +[i/2]$ non-isometric unit volume 
metrics of constant scalar curvature in $[g_0^3 +T g_0^3 ]$.

\end{Corollary}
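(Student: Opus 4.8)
The plan is to read off the corollary as a direct application of the main Theorem, applied separately to each of the two types of isoparametric hypersurfaces in $S^3$, and then carefully converting the eigenvalue-based counting into the statement about the parameter $T$. First I would recall that the Theorem guarantees at least $i$ solutions of equation \eqref{SY} on $S^n$ for $\lambda \in (\lambda^{l,q}_i, \lambda^{l,q}_{i+1}]$ that are constant along $S$. For the corollary, $n=k=3$, so the relevant exponent is $q = p_6 - 1 = p_{n+k}-1 = 2$, giving $\lambda^{l,q}_i = -\lambda_{il}/(q-1) = -\lambda_{il} = il(3+il-1) = il(il+2)$. I would then observe that, under the correspondence stated just before the corollary, $\lambda$ in the Yamabe equation on the product corresponds to $(1/a_6)(n(n-1) + (1/T)k(k-1)) = (1/5)(6 + 6/T)$, and that this is a monotone decreasing function of $T$, so that a threshold on $\lambda$ translates into a threshold on $T$. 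The definition $T_i = 6/(-5\mu_i - 6)$ with $\mu_i = -i(i+2)$ is precisely chosen so that $\lambda = il(il+2)$ at the corresponding value of $T$; I would verify this identification by substituting and checking that $T = T_{il}$ gives $\lambda = -\mu_{il} = il(il+2)$.

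The key combinatorial step is then to count solutions for both families simultaneously. For the $SO(3)$-orbits we have $l=1$, so the Theorem produces at least $i$ solutions when $\lambda \in (\lambda^{1,q}_i, \lambda^{1,q}_{i+1}]$, i.e. when $\lambda$ crosses the thresholds $-\mu_i = i(i+2)$. For the $SO(2)\times SO(2)$-orbits we have $l=2$, so the thresholds are at $-\mu_{2j} = 2j(2j+2)$, and the number of such solutions guaranteed for $\lambda$ in the interval indexed by $j$ is $j$. Translating to the variable $T$ via the decreasing correspondence, for $T \in [T_{i+1}, T_i)$ the value of $\lambda$ lies in an interval that has crossed exactly $i$ of the degree-one thresholds (those indexed $1,\dots,i$) and exactly those degree-two thresholds $-\mu_{2j}$ with $2j \le i$, i.e. $j \le i/2$, so the number of degree-two solutions is $\lfloor i/2 \rfloor$. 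Adding the two counts gives $i + \lfloor i/2 \rfloor$, which is the claimed number.

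Finally I would address the two points that keep this from being a purely mechanical tally. First, one must ensure that solutions arising from the $l=1$ family and solutions arising from the $l=2$ family, and distinct solutions within each family, are genuinely non-isometric rather than being accidentally the same metric counted twice: since a solution constant along an $SO(3)$-orbit hypersurface has a different symmetry type and a different nodal/level-set structure than one constant along an $SO(2)\times SO(2)$-orbit hypersurface, the two families are disjoint, and within each family the $i$ (respectively $j$) solutions produced by the Theorem are distinct. Second, one must confirm that each solution $u$ of the subcritical equation \eqref{SY} on $S^3$ indeed lifts to a genuine, non-isometric unit-volume constant-scalar-curvature metric on the product $(S^3 \times S^3, [g_0^3 + T g_0^3])$; this uses the reduction, explained in the introduction, that because $p_{n+k} < p_n$ a solution of the subcritical equation on $S^n$ pulls back to a solution of the Yamabe equation on the product. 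I expect the main obstacle to be precisely this bookkeeping of non-isometry and the careful verification that the threshold arithmetic with the floor function lands exactly on $i + \lfloor i/2 \rfloor$ rather than an off-by-one count at the endpoints of the half-open interval $[T_{i+1}, T_i)$; the analytic content is entirely contained in the Theorem, so everything else is a matter of matching indices and symmetry types.
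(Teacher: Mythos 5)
Your proposal is correct and is exactly the argument the paper intends (the paper states the Corollary without a written proof, but the surrounding text --- the identification $\lambda = \frac{1}{5}(6+6/T)$, the definition $T_i = 6/(-5\mu_i-6)$, and the two families $l=1$ and $l=2$ --- is precisely your counting). Your translation of $T\in[T_{i+1},T_i)$ into $\lambda\in(-\mu_i,-\mu_{i+1}]$, the tally of $i$ radial solutions plus $\lfloor i/2\rfloor$ solutions invariant under $SO(2)\times SO(2)$, and your remarks on distinguishing the two families by their level-set structure all match the paper's implicit reasoning.
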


The generic connected components of the level surfaces of the corresponding solutions
of the Yamabe equation are diffeomorphic to either $S^2 \times S^3$ or
$S^1 \times S^1 \times S^3$.

\vspace{.2cm}

Let us now consider the general case $n,k\geq 2$. One needs to count the number of known isoparametric hypersurfaces
(to be more precise one should say families of isoparametric hypersurfaces: each isoparametric hypersurfaces has associated
an isoparametric function $f$ and then the family of isoparametric hypersurfaces given as the regular level sets of $f$).
For a positive integer  $m$  let $j_0$ be the number of 2's that appear in the factorization of $m+1$. If  $j_0=4l+d$ with $d=0,1,2,3$,
 define $\gamma(m)= 8l+2^d$. Also, for a positive integer $r$ 
 let $\phi(r)$ be the number of integers $s$ such that $1\leq s\leq r-1$ and $s\equiv0,1,2,4$  $mod(8)$. And define 
$\beta(m)=\sum_{   \{ m_1 \  : \  \phi(m_1) \leq j_0  \ , \  m_1 \equiv 0 (4)  \}  }        [\frac{m+1}{2^{\phi (m_1 )+1}}].$

\vspace{.1cm}

Let $\mu_{i}^{n,k} = (n+k-1) \ \lambda_{i,n}$
and  $T_{i}^{n,k} = k(k-1)/(-\mu_{i}^{n,k}-n(n-1) )$, for $i\geq 1$. We have:

\begin{Corollary}
Let $N_Y^{n,k} (T)$ be the number of isometrically distinct unit volume metrics of constant scalar curvature in
the conformal class of $(S^n \times S^k , g_0^n + T g_0^k )$ and suppose that $T\in [T_{i+1}^{n,k},T_{i}^{n,k})$. 
Then:
\begin{itemize}
\item [a)] If $n=2m$ with $m\neq 2$ then $N_Y^{n,k} (T) \geq i +[(2m-1)/2][i/2]$.  
\item [b)] If $n=2m+1$ with $m\neq 1, 3, 4,  6, 7, 9, 12$ then $N_Y^{n,k} (T) \geq i+m[i/2]+(\gamma(m) + \beta (m) ) [i/4]$. 
\item [c)] \begin{enumerate}
\item If $n=3$  then  $N_Y^{n,k} (T) \geq i+[i/2]$.
\item If $n=4$ then  $N_Y^{n,k} (T) \geq i+[i/2]+[i/3]$.
\item If $n=7$ then  $N_Y^{n,k} (T) \geq i+3[i/2]+[i/3]+[i/4]+[i/6]$.
\item If $n=9$ then  $N_Y^{n,k} (T) \geq i+4[i/2]+2[i/4]$.
\item If $n=13$ then  $N_Y^{n,k} (T) \geq i+6[i/2]+[i/3]+[i/4]+[i/6]$.
\item If $n=15$ then  $N_Y^{n,k} (T) \geq i+7[i/2]+4[i/4]$.
\item If $n=19$ then  $N_Y^{n,k} (T) \geq i+9[i/2]+3[i/4]$.
\item If $n=25$ then $N_Y^{n,k} (T) \geq i+ 12[i/2]+[i/3]+[i/4]$.
\end{enumerate}

\end{itemize}
\end{Corollary}

Isoparametric hypersurfaces of degree 1  in $S^n$ are homogeneous, the orbits of the action of $O(n)$, the corresponding solutions
to the Yamabe equation are the radial solutions discussed in \cite{Jin, Petean}. These solutions account for all the $i$'s in the corollary.
Similarly isoparametric hypersurfaces of degree 2 are  $S^{k}\times S^{n-k-1}$, the orbits of the isometric actions of
$O(k+1) \times O(n-k)$, with $1\leq k\leq [n-1/2]$: these solutions account for all the $[i/2]$'s in the corollary. 
Isoparametric hypersurfaces of  degree 3 were already classified by E. Cartan \cite{Cartan}: in this case the three principal curvatures
have the same multiplicity which  can be  1, 2, 4 or 8. The corresponding isoparametric hypersurfaces in $S^4$,
$S^7$, $S^{13}$ and $S^{25}$ are called
Cartan hypersurfaces and they are tubes over the canonical embeddings of the projective planes ${\bf FP}^2$ (where
${\bf F}$ are the real, complex, quaternionic or Cayley numbers). These account for all the $[i/3]$'s in the corollary.
There are essentially two  isoparametric hypersurfaces of degree 6 in the sphere, one in $S^{7}$ and the other in $S^{13}$. 
In both cases all the principal curvature have the same multiplicities,  1 and 2 respectively 
(see U. Abresch \cite{abresch} and R. Miyaoka \cite{Miyaoka}). 
Isoparametric hypersurfaces of degree 4 provide the richest examples.  The $[i/4]$'s appearing in the corollary come from a careful
counting of all known isoparametric hypersurfaces of degree 4. We give the details on this counting at the end of next 
section. It is known that  new examples of  isoparametric hypersurfaces of degree 4 can only appear in $S^{15}$, $S^{19}$ y $S^{31}$,
 see T. Cecil, Q. S.  Chi and G. Jensen \cite{Cecil}. Therefore in all other dimensions we are counting all solutions of the Yamabe
 equation provided by Theorem 1.1.

\vspace{.5cm}

For a general closed Riemannian manifold $(M,g)$ with an isoparametric hypersurface
$S\subset M$ we will show in Section 3 that there is a subsequence of the sequence of eigenvalues
of the Laplace operator on $(M,g)$ such that for each of the eigenvalues in the
subsequence there exists an associated  non-zero eigenfunction which is constant along  $S$. 
Let us call $\lambda_0 = \lambda_0 (M,g,S)<0$ the first such eigenvalue.

\begin{Theorem}  Assume that $(M^n ,g)$ is a closed Riemannian $n$-manifold of constant 
scalar curvature and $S$ an isoparametric hypersurface as above.
If $(N^k ,h)$ is a Riemannian $k$-manifold of constant scalar curvature let
${\bf s} = s_g + s_h$ be the scalar curvature of the product.
If $ {\bf s} >\frac{-a_{n+k} \lambda_0 }{p_{n+k}-2}$  there is a smooth function 
$u: M \rightarrow \re$ which is constant along $S$ and  solves the Yamabe 
equation for $(M\times N, g+h)$.
\end{Theorem}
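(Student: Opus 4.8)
The plan is to realize the desired $u$ as a \emph{nonconstant} minimizer of the Yamabe functional restricted to functions that are invariant under the isoparametric foliation, the threshold on ${\bf s}$ being exactly what forces the minimizer away from the constants (the constant function always solves the equation and is trivially constant along $S$, so only a nonconstant solution is genuinely new).

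First I would reduce the problem to a subcritical equation on $M$. A function $u$ on $M\times N$ that depends only on the $M$-factor satisfies $\Delta_{g+h} u = \Delta_g u$, and the product has constant scalar curvature ${\bf s}=s_g+s_h$; hence the Yamabe equation for $(M\times N,g+h)$ with exponent $p=p_{n+k}$ reduces to
$$-a_{n+k}\,\Delta_g u + {\bf s}\, u = \mu\, u^{p-1}$$
on $M$ for a constant $\mu$. Since $k\geq 1$ we have $p_{n+k}<p_n$, so this is subcritical for the Sobolev embedding $H^1(M^n)\hookrightarrow L^p(M^n)$. Note also that the hypothesis ${\bf s}>-a_{n+k}\lambda_0/(p_{n+k}-2)$, together with $\lambda_0<0$, forces ${\bf s}>0$, so the quadratic form $u\mapsto \int_M a_{n+k}\|\nabla u\|^2 + {\bf s}\,u^2\,dvol_g$ is coercive and equivalent to the square of the $H^1$-norm.

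Next I would minimize the quotient $Q(u)=\big(\int_M a_{n+k}\|\nabla u\|^2 + {\bf s}\,u^2\,dvol_g\big)/\|u\|_p^2$ over the closed subspace $\A\subset H^1(M)$ of functions constant along the level sets of the isoparametric function $f$ associated to $S$. By the subcritical compactness of $H^1(M)\hookrightarrow L^p(M)$ a minimizing sequence in $\A$ converges to a minimizer $u_0\in\A$; replacing $u_0$ by $|u_0|$ leaves $Q$ and membership in $\A$ unchanged, and the strong maximum principle together with elliptic regularity yield a positive smooth minimizer. The crucial point is that this $\A$-minimizer solves the \emph{full} equation on $M$, i.e. symmetric criticality applies. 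This rests on the isoparametric identities: if $u=w\circ f$ then $\Delta_g u = (b\circ f)\,(w''\circ f) + (a\circ f)\,(w'\circ f)$, which is again a function of $f$, so $\Delta_g$ preserves $\A$. Being self-adjoint, $\Delta_g$ then also preserves the $L^2$-orthogonal complement of $\A$, and since powers of $\A$-functions remain in $\A$, testing the Euler--Lagrange equation separately against elements of $\A$ (where $u_0$ is a constrained critical point) and of $\A^\perp$ (where all three terms vanish by orthogonality) shows $u_0$ is a weak, hence smooth, solution on all of $M$. Working with the weak formulation on $M$ rather than with the reduced ODE is precisely what lets us ignore the singular behaviour at the focal submanifolds, where $\nabla f$ vanishes.

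Finally I would verify that $u_0$ is nonconstant, which is the content carried by the bound on ${\bf s}$. For this I estimate $Q$ on the family $u=1+\varepsilon\varphi_0$, where $\varphi_0\in\A$ is an eigenfunction for the first $S$-invariant negative eigenvalue $\lambda_0$, so that $\int_M\varphi_0\,dvol_g=0$ and $\int_M\|\nabla\varphi_0\|^2 = -\lambda_0\int_M\varphi_0^2$. Expanding numerator and denominator to second order in $\varepsilon$, the coefficient of $\varepsilon^2$ in $Q(1+\varepsilon\varphi_0)$ is a positive multiple of $-a_{n+k}\lambda_0 - {\bf s}\,(p_{n+k}-2)$, which is strictly negative exactly when ${\bf s} > -a_{n+k}\lambda_0/(p_{n+k}-2)$. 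Thus, under the hypothesis, $Q(1+\varepsilon\varphi_0)$ drops below the value of $Q$ at the constant for small $\varepsilon$, so the minimizer $u_0$ cannot be constant, completing the proof. The step I expect to be the main obstacle is the symmetric-criticality argument of the previous paragraph: one must check carefully that minimizing only over $\A$ still produces a bona fide solution of the Yamabe equation on the product and that this solution is regular across the focal sets. By contrast, the matching of the bifurcation threshold to the stated bound on ${\bf s}$ is a direct second-variation computation.
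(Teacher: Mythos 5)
Your proposal is correct and is essentially the paper's own proof: both minimize the Yamabe quotient over the class $S_f$ of $f$-invariant functions using subcritical compactness (Rellich--Kondrakov, since $p_{n+k}<p_n$), obtain symmetric criticality from the fact that $\Delta_g$ preserves $S_f$ (the paper's Lemma 3.1), and rule out the constant minimizer by the same second-variation computation at $1+\varepsilon w$, whose $\varepsilon^2$-coefficient is a positive multiple of $(2-p){\bf s}-a_{n+k}\lambda_0$. The only presentational difference is that you verify the Euler--Lagrange equation by splitting test functions into $S_f$ and its $L^2$-orthogonal complement, whereas the paper notes that the Euler--Lagrange expression itself lies in $S_f$ and pairs it against $S_f$.
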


\section{Some observations  on isoparametric functions }

Let $(M,g)$ be a closed connected Riemannian manifold and $f:M \rightarrow [a,b]$ a smooth surjective
function. 
$f$  is called isoparametric if there exist  a 
smooth function $b: \re \rightarrow \re $ and  a continuous function $a: \re \rightarrow \re $ such that 
\begin{equation}\label{transnormal} \|\bigtriangledown f \|^2=b(f) \end{equation}        and \begin{equation}\label{isopara}\Delta f= a(f) \end{equation}

 Isoparametric functions have been studied for a long time, in particular by E. Cartan \cite{Cartan} in space
forms. We will mostly be interested in the case of the sphere but we will also use some  results 
about isoparametric functions
in general Riemannian manifolds: a good introduction to this general case is  the work of Q. M. Wang
\cite{Wang}.  

If $f:M \rightarrow [a,b]$ is an isoparametric function then $a$ and $b$ are the only critical
values of $f$. The level sets $f^{-1} (a) $ and $f^{-1} (b)$ are called the focal hypersurfaces
of $f$. For $t\in (a,b)$, the regular level sets of $f$, $f^{-1} (t)$ are called isoparametric hypersurfaces.

We are going to be interested in the family of functions which are constant along the level surfaces
of a fixed isoparametric function. We will use  the following notation:

\begin{Definition}
We call $S_f $ the family of 
functions on $S^n$ which are constant along level surfaces of $f$. 
\end{Definition}

{\bf Remark}: We have not said if the functions on $S_f$ are meant to be continuous, or
smooth, for instance. This is because we will need to use different function spaces. We
will be explicit about the regularity assumed for functions in $S_f$ when needed. 

\vspace{.2cm}

The most familiar case is to consider an isometric 
 codimension one action by a Lie group
$G$ on $(M,g)$. Then any smooth function $f$ for which the level sets are 
the orbits of the action is isoparametric
 (equivalently,  $f$ is the composition of the projection to the orbit space with an
 injective smooth function on the orbit space). Then $S_f$ is the family of
 $G$-invariant functions (or equivalently the family of functions obtained as the
 composition of $f$ with a function on the range of $f$).
 
 \vspace{.2cm}

In the reminder of this section we will discuss the case of the round sphere.
Therefore from now on let $f:S^n \rightarrow [a,b]$ be an isoparametric function on the sphere. 
Let $t$ be a regular value of $f$ and denote by
$S_t =  f^{-1} (t)$ the corresponding isoparametric hypersurface. It was shown by 
Cartan that  $S_t$ has
constant principal curvatures and this condition  characterizes isoparametric 
hypersurfaces in space forms; but we will not make use of this fact.

It is clear that if $f$ is an isoparametric function and $u:\re \rightarrow \re$ is a smooth strictly
monotone function then $u\circ f$ is also an isoparametric function. In the case of the sphere one
has a certain normalization: if $M \subset S^n$ is an isoparametric hypersurface then 
M\"{u}nzner proved in \cite{Munzner, Munzner2} that there exists a homogeneous
polynomial $F: {\re^{n+1}} \rightarrow \re$ of degree $l$ which satisfies what are known as the
{\it Cartan-M\"unzner equations}:

 \begin{equation}\label{munznerI} <\nabla F,\nabla F>=l^2\|x\|^{2l-2}
\end{equation}
\begin{equation}\label{munznerII}
\Delta F=\frac{1}{2}cl^2\|x\|^{l-2} ,
\end{equation}
where $c$ is an integer which is given below,
such that  $M$ is  a regular  level set of $f={F _| }_{S^n}$, which is an isoparametric function. 
Moreover, $l$ can only take the values 1, 2, 3, 4 or 6 and
coincides with the number of distinct principal curvatures of $M$. In case $l = 3$ (or 1) the principal curvatures have the same
multiplicities and in case $l=2, 4$ or $6$ there are integers $m_1$ and $m_2$ (which might be equal) such that
half of the principal curvatures have multiplicity $m_1$ and the other half $m_2$; in particular
$(l/2) (m_1 +m_2 )= n-1$. 
Then the constant $c$ is
$m_2 - m_1$. The polynomial $F$ is called a {\it Cartan-M\"unzner polynomial}.
Note that interchanging $m_1$ and $m_2$ corresponds to replacing $F$ 
with $-F$. In this situation we will say that $f$ is an isoparametric function
of {\it degree $l$} and similarly that $M$ is an isoparametric hypersurface of
{\it degree $l$}.

Recalling that the Laplace
operator $ \Delta_{S^n}$ on the sphere relates to the Laplacian on Euclidean space by the formula

$$\Delta_{S^n} (u) =  \left( \Delta U -\frac{\partial^2 U}{\partial r^2} \right) -\ n \ \frac{\partial U}{\partial r} $$ 

\noindent
and

$$ \| \nabla (U) \|^2  = {\left( \frac{\partial U}{\partial r} \right)}^2 + \| \nabla (u) \|^2 $$

\noindent
(where $r= \| x\| $, $U:\re^{n+1} \rightarrow \re$ and $u=U|_{S^n}$) it is easy to check that 
$$\| \nabla (f) \|^2 = b \circ f $$

$$ \Delta_{S^n} (f) = a\circ f ,$$  

\noindent
$a(t)= -l(n+l-1)t+(1/2) c l^2$, $b(t)=-l^2 t^2  + l^2$. Note that since the only zeros of $b$ are
$\pm 1$ it follows that the range of $f$ is $ [-1,1]$. 

\vspace{.5cm}

Let us see a few examples of isoparametric functions in spheres and the corresponding 
solution of the Cartan-M\"unzner equations.

\begin{Example}\label{x_n+1} The simplest example of f a 
Cartan-M\"unzner polynomial,   is given  by $f(x)=f(x_1,\dots,x_{n+1})=x_{n+1}$. Then
$\|\bigtriangledown f \|=\sqrt{1-x_{n+1}^2}=\sqrt{1-f^2}$ and $\Delta f=nx_{n+1}=nf$. $f$ is invariant
under the canonical $SO(n)$ action that fixes the poles and $S_f$ is just the family of radial functions.
\end{Example}

\begin{Example} Consider the obvious $O(n) \times O(k)$-action on $S^{n+k-1} \subset 
{\bf R^{n+k}}$. For $(x,y)\in {\bf R^{n+k}}$ write $x^2 =x_1^2 +...+x_n^2$ and
$y^2 =y_1^2 +...+y_k^2$. Then $x^2$ and $y^2$ are homogeneous polynomials of
degree 2 invariant under the action. Let $F= x^2 -y^2$. Then $F$ is a Cartan-M\"unzner
polynomial with $l=2$, $m_1 = k-1$, $m_2 = n-1$. The corresponding isoparametric hypersurfaces 
have two principal curvatures and are diffeomorphic to $S^{n-1} \times S^{k-1}$. As before we
denote by $f=F_{|_{S^n}}$; then $S_f$ is the family of $O(n) \times O(k)$-invariant functions.
\end{Example}

\begin{Example}\label{exmpl2} If $z\in \re^{2n+2}$ we write  $z=(x,y)\in \re^{n+1}\times \re^{n+1}$. 
Consider the homogeneous polynomial  $F:\re^{2n+2} \rightarrow \re$  defined by $F(z)=\|  z\|^4 -2(\|x\|^2-\|y\|^2)^2 -8(<x,y>)^2$.
Then $F$ is a Cartan-M\"unzner
polynomial with $l=4$, $m_1 = 1$, $m_2 = n-1$. It is proved by K. Nomizu in \cite{Nomizu}
that the four distinct principal curvatures are $\frac{1+\sin(2t)}{\cos(2t)}$,   $\frac{-1+\sin(2t)}{\cos(2t)}$, $\tan 
(t)$ y $-\cot (t)$  (the first two have multiplicity $n-1$ and the second two have multiplicity 1).
 
 \end{Example}

An isoparametric hypersurface $M$ of $S^n$ is homogeneous if there is a suitable subgroup of $O(n)$  that acts on $M$ transitively.
These are the examples which are easier to understand. The previous examples are all homogeneous. And there exist much more
examples, of course. All homogeneous isoparametric hypersurfaces have actually been classified by W. H. Hsiang and H. B. Lawson in \cite{Lawson}.
But not all the isoparametric surfaces are homogeneous. The first examples were constructed by H. Ozeki and M. Takeuchi in
\cite{Ozeki-Takeuchi-I}:

\begin{Example}Let  ${\bf{H}}$ be the real quaternion algebra and let $u\longrightarrow \bar{u}$ be the natural involution. 
We can identify $\re^{16}$ with ${\bf{H}}^2\times\bf{H}^2$, we note $x\in \re^{16}$ as  $x=(u_0,u_1,v_0,v_1)$ 
where $u_i,v_i\in {\bf{H}}$ for $i=\{0,1\}$. Let $F:\re^{16}\longrightarrow \re$ defined by 
\begin{equation}F=\|x\|^4-2F_0(x)\end{equation}
 where

\begin{eqnarray}\label{nohomogeneus}
F_0(x)&=&4\Big(u_0\bar{v}_0+u_1\bar{v}_1\Big)\Big(\bar{u}_0v_0+\bar{u}_1v_1\Big) \nonumber \\
&-&\Big(u_0\bar{v}_0+u_1\bar{v}_1+v_0\bar{u}_0+v_1\bar{u}_1\Big)^2\\
&+& \Big[ u_1\bar{u}_1-  v_1\bar{v}_1+u_0\bar{v}_0+v_0\bar{u}_0\Big]^2 \nonumber
\end{eqnarray}

 In \cite{Ozeki-Takeuchi-I} and \cite{Ozeki-Takeuchi-II}, Ozeki and Takeuchi showed that the function $F$ 
 (that satisfied the equations (\ref{munznerI}) and (\ref{munznerII}) with $l=4$ and $c=1$) produced non homogeneous isoparametric hypersurfaces in $S^{15}$.

\end{Example}

We now study the number of distinct isoparametric hypersurfaces of degree 4 which are known. This is the counting we used for Corollary 1.3.

In the article \cite{Ferus} Ferus, Karcher and M\"unzner  generalized the construction of Ozeki and Takeuchi to produce examples of homogeneous
and non-homogeneous isoparametric hypersurfaces of degree 4: these are known as hypersurfaces of Ferus-Karcher-M\"unzner type
or F-K-M hypersurfaces. The only known examples of isoparametric hypersurfaces of degree 4 which are not F-K-M are two
homogeneous examples with multiplicities $(2,2)$ and $(4,5)$. S. Stolz \cite{Stolz} proved that the multiplicity of any 
isoparametric hypersurface of degree 4 is equal to one of these known examples. 
The multiplicities of F-K-M   hypersurfaces are of the form $(m_1,m_2 )=(m_1 ,l\delta(m_1)-m_1-1)$ where $\delta(m_1)$  is the unique positive integer such that the Clifford algebra of $m_1$ elements $C_{m_1-1}$ has an irreducible representation on $\re^{\delta(m_1)}$ and $l$ is an integer such that  $m_2>0$, see T. Cecil, Q. S. Chi, G. R. Jensen \cite{Cecil}. It is known (see for instance  \cite{Stolz}) that $\delta(r)=2^{\phi(r)}$ where $\phi(r)$ is the number of integers $s$ such that $1\leq s\leq r-1$ and $s\equiv0,1,2,4$  $mod(8)$. 
$\phi(r)$ is of course cyclic $mod(8)$. As a simplification for the reader,  we present a table with the value of $\phi(r)$ for $1\leq r\leq 16$:

\begin{center}
\begin{tabular}{||cc||cc||}\hline
$r$&$\phi(r)$&$r$&$\phi(r)$\\ \hline
1&0&9&4\\
2&1&10&5\\
3&2&11&6\\
4&2&12&6\\
5&3&13&7\\
6&3&14&7\\
7&3&15&7\\
8&3&16&7\\\hline
\end{tabular}
\end{center}

The dimension of the hypersurface with multiplicities $(m_1,m_2)$ is  of course $2(m_1 + m_2 )$. So there are no isoparametric
hypersurfaces in even-dimensional spheres. We want to count the number of such hypersurfaces in $S^{2m+1}$. The homogeneous
hypersurfaces with multiplicities $(2,2)$ and $(4,5)$ provide one example in $S^9$ and one in $S^{19}$. The case $m_1 =1$ in the
previous discussion provides one F-K-M hypersurface for any $m\geq 2$.  Ferus, Karcher and M\"unzner proved in \cite{Ferus} that if
 $m_1\equiv 0$ $mod(4)$,  there are $[l/2]+1$ incongruent families of isoparametric hypersurfaces 
 (two families are called {\it congruent} if  the corresponding Cartan-M\"unzner polynomials are related by an isometry of the sphere)
 with  multiplicity
 $(m_1 , l\delta(m_1)-m_1-1)$. For all other $m_1$'s there is only one F-K-M.

We provide the table with  the pairs $(m_1,m_2)$ that  correspond to families of isoparametric surfaces of F-K-M type with multiplicities $(m_1,m_2,m_1,m_2)$
for small values of $m_1$ and $m_2$ to simplify checking Corollary 1.3 (c):

\begin{center}
\begin{tabular}{|c|l|}\hline
$(1,k-2)$&${(1,1)}, {\bf (1,2)}, (1,3), (1,4), {\bf(1,5)}, (1,6), (1,7), \dots$\\
$(2,2k-3)$&${\bf(2,1)}, (2,3), {\bf(2,5)}, (2,7), {(2,9)}, (2,11), (2,13), \dots$\\
$(3,4k-4)$&${\bf(3,4)}, (3,8), {(3,12)}, (3,16), {(3,20)}, (3,24), \dots$\\
$(4,4k-4)$&${\bf(4,3)}, (4,7), {(4,11)}, (4,15), {(4,19)}, (4,23),  \dots$\\
$(5,8k-6)$&${\bf(5,2)}, (5,10), {(5,18)}, (5,26), {(5,34)}, (5,42),  \dots$\\
$(6,8k-7)$&${\bf(6,1)}, {\bf(6,9)}, {(6,17)}, (6,25), {(6,33)}, (6,41),  \dots$\\
$(7,8k-8)$&${\bf(7,8)}, (7,16), {(7,24)}, (7,32), {(7,40)}, (7,48),  \dots$\\
$(8,8k-9)$&${\bf(8,7)}, (8,15), {(8,23)}, (8,31), {(8,39)}, (8,47),  \dots$\\
$(9,16k-10)$&${\bf(9,6)}, (9,22), {(9,38)}, (9,54), {(9,70)}, (9,86), \dots$\\
$(10,32k-11)$&${(10,21)}, (10,53), {(10,85)}, (10,107), {(10,149)}, \dots$\\
$(11,64k-12)$&${(11,52)}, (11,116), {(11,180)}, (11,244), {(11,308)},  \dots$\\
$(12,64k-13)$&${(12,51)}, (12,115), {(12,179)}, (12,243), {(12,307)},  \dots$\\
$(13,128k-14)$&${(13,114)}, (13,242), {(13,370)}, (13,498), {(13,626)},  \dots$\\
$\vdots$ & $\cdots$ \\\hline
\end{tabular}
\end{center}

The family that correspond to $(2,1)$, $(6,1)$, $(5,2)$ and one of the family of $(4,3)$ 
(from the comments above we know that there are two of these) 
are congruent 
to those with multiplicities $(1,2)$, $(1,6)$, $(2,5)$ and $(3,4)$ respectively, and these are the only congruences among the hypersurfaces  of F-K-M type, see T. Cecil \cite{Cecil2} and Ferus, Karcher, M\"unzner \cite{Ferus}. 
So for instance for $2m+1 =7$ the pair $(1,2)$ provides the only F-K-M hypersurface in $S^7$. The $(1,3)$ F-K-M and the homogeneous $(2,2)$ give the
two examples in $S^9$. 
The $(1,4)$ and $(2,3)$ F-K-M hypersurfaces provide the two examples in $S^{11}$.
The $(1,5)$ F-K-M hypersurface is the only example in $S^{13}$. In $S^{15}$ we have the F-K-M hypersurfaces of multiplicities
$(1,6)$, $(2,5)$, $(3,4)$ and (the extra) $(4,3)$. In $S^{17}$ we only have the $(1,7)$ F-K-M hypersurface and in $S^{19}$ we have the homogeneous
example with multiplicities $(4,5)$ and the F-K-M hypersurfaces of multiplicities $(1,8)$ and $(2,7)$.

Now assume that $m\geq 10$. If $m+1 =  2^{j} \ c$   and $m_1$ is such that  $\delta(m_1)=2^{j}$ then
we have an F-K-M hypersurface with multiplicities $(m_1 , 2^j c -m_1 -1)$ in $S^{2m+1}$, as long
as $m_1 <\delta(m_1) \ c -1$ (but this condition is certainly verified if $m_1>8 $ or $c>1$ and $m_1 \neq 1$. For $c=1$
we would have  $\delta(m_1) = m+1 \leq m_1 +1 \leq 9$. Therefore we can forget about this condition
when $m\geq 10$). 
Then we have one F-K-M hypersurface for any $m_1$ such that $\phi(m_1) \leq j_0$.
 Therefore if we write $j_0=4l+d$ with $l\geq 0$ and $d=0, 1, 2, 3$ then there are  $\gamma(m)=8l+2^d$ F-K-M
 hypersurfaces  in $S^{2m+1}$.

Finally if $\phi(m_1) \leq j_0$ and  $m_1\equiv 0$ $mod(4)$, we have $[l/2]+1$ incongruent F-K-M hypersurfaces with
$m_2=l \delta(m_1)-m_1-1$ (by the result in \cite{Ferus} mentioned above). We already counted one of them. 
Note that $l=(m+1)/2^{\phi(m_1)}$. Then we call 
$\beta(m)=\sum_{   \{ m_1 \  : \  \phi(m_1) \leq j_0  \ , \  m_1 \equiv 0 (4)  \}  }        [\frac{m+1}{2^{\phi (m_1 )+1}}].$

It follows that we  have  at least $\gamma(m)+\beta(m)$ different families of isoparametric hypersurface of degree 4 in $S^{2m+1}$ with $m\geq 10$.

\section{Eigenvalues of the Laplacian}

Let $(M,g)$ be a closed Riemannian manifold and $f:M \rightarrow \re$
an isoparametric function. Then there exist eigenfunctions of the Laplace operator
of $(M,g)$ which belong to $S_f$. Let us first observe the following:

\begin{Lemma} If $u\in S_f$ is smooth then  $\Delta_{g} u \in S_f$.
\end{Lemma}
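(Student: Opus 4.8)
The plan is to show that if $u \in S_f$ is smooth, then $\Delta_g u$ is again constant along the level surfaces of $f$.

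The key point is that since $u \in S_f$, we can write $u = \phi \circ f$ for some smooth function $\phi : [a,b] \to \re$ on the range of $f$ (at least away from the focal sets; smoothness of $u$ and the structure of $f$ handle the critical values). Then I would compute $\Delta_g u$ using the chain rule for the Laplacian of a composition:

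$$\Delta_g (\phi \circ f) = (\phi' \circ f)\, \Delta_g f + (\phi'' \circ f)\, \| \nabla f \|^2.$$

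Now I invoke the two defining properties of an isoparametric function, equations (\ref{isopara}) and (\ref{transnormal}): we have $\Delta_g f = a \circ f$ and $\| \nabla f \|^2 = b \circ f$. Substituting these gives

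$$\Delta_g (\phi \circ f) = \big[ (\phi' \cdot a) + (\phi'' \cdot b) \big] \circ f,$$

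which is manifestly a function of $f$ alone, hence an element of $S_f$. This is the entire content of the lemma.

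The main subtlety to address carefully is the behavior at the focal hypersurfaces $f^{-1}(a)$ and $f^{-1}(b)$, where $\| \nabla f \| = 0$ and the representation $u = \phi \circ f$ might be delicate (a smooth $u$ constant on level sets need not correspond to a smooth $\phi$ all the way to the endpoints, since $f$ has critical points there). I expect this to be the only real obstacle. One way to handle it is to observe that the identity $\Delta_g u = [(\phi' a) + (\phi'' b)] \circ f$ holds on the open dense set of regular values, so $\Delta_g u$ is constant along every regular level set by continuity; the focal sets have measure zero and $\Delta_g u$ is continuous, so being constant along the regular levels forces it to lie in $S_f$ globally. Thus the chain-rule computation on the regular part, combined with a density and continuity argument across the focal hypersurfaces, completes the proof.
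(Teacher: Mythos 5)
Your proof is correct and follows essentially the same route as the paper: write $u=\varphi\circ f$, apply the chain rule $\Delta_g(\varphi\circ f)=(\varphi''\circ f)\,\|\nabla f\|^2+(\varphi'\circ f)\,\Delta_g f$, and substitute the two defining identities $\|\nabla f\|^2=b\circ f$ and $\Delta_g f=a\circ f$ to conclude $\Delta_g u=(\varphi''\,b+\varphi'\,a)\circ f\in S_f$. The only difference is that you explicitly handle the focal sets via a density and continuity argument, a point the paper's one-line proof passes over in silence.
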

\begin{proof} Let $u=\varphi \circ f$, for a smooth function $\varphi$. Then by a direct computation $\Delta_{g} u
= \varphi ''   b(f)+\varphi ' a(f) \in S_f$. 
\end{proof}

\begin{Proposition} There exist an infinite sequence   $0<\lambda_1<\lambda_2<\lambda_3<\dots <\infty$    such that 
there exists an eigenfunction $f_i \in S_f$ of $\Delta_{g}$ with eigenvalue
$-\lambda_i$ .
\end{Proposition}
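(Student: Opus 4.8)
The plan is to search for eigenfunctions of $\Delta_g$ already lying in the invariant family $S_f$, which by the preceding Lemma turns the eigenvalue equation into an ordinary differential equation on the range of $f$. Write $[t_-,t_+]$ for that range (so $t_\pm$ are the two critical values and $b$ vanishes only at the endpoints), and set $u=\varphi\circ f$. The Lemma gives $\Delta_g u=(b\varphi''+a\varphi')\circ f$, so $u$ is an eigenfunction with eigenvalue $-\lambda$ precisely when
\[
b(t)\,\varphi''(t)+a(t)\,\varphi'(t)+\lambda\,\varphi(t)=0,\qquad t\in(t_-,t_+).
\]
To promote this to a genuine spectral problem I would first record the $L^2$ structure via the coarea formula: pushing the Riemannian volume forward by $f$ yields a measure $d\mu=V(t)\,dt$ on $[t_-,t_+]$, with $V(t)=\mathrm{area}\,(f^{-1}(t))/\sqrt{b(t)}$ since $\|\nabla f\|^2=b(f)$. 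For $u=\varphi\circ f$ and $v=\psi\circ f$ one then has $\int_M uv\,dvol_g=\int\varphi\psi\,d\mu$ and, because $\nabla u=\varphi'(f)\nabla f$, also $\int_M\langle\nabla u,\nabla v\rangle\,dvol_g=\int b\,\varphi'\psi'\,d\mu$. Thus $S_f$ completes to a closed subspace $\mathcal H_f\cong L^2([t_-,t_+],d\mu)$ of $L^2(M)$, on which the Dirichlet energy becomes the Sturm--Liouville form $\varphi\mapsto\int b\,|\varphi'|^2\,d\mu$, whose associated self-adjoint operator is $-\tfrac1V(bV\varphi')'$, i.e. exactly the operator $-(b\varphi''+a\varphi')$ of the displayed ODE (using the compatibility identity $aV=(bV)'$ relating $a$, $b$ and the coarea density).

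Next I would extract the eigenvalues. The decisive analytic input is that this form is nonnegative and that its form domain embeds compactly into $L^2([t_-,t_+],d\mu)$; this can be obtained directly from a one-dimensional Rellich-type estimate, or inherited from compactness of the resolvent of $\Delta_g$ on the closed manifold $M$ once one notes that $\mathcal H_f$ reduces $\Delta_g$ (which follows from the Lemma, $\Delta_g$ preserving the dense subspace of smooth functions in $S_f$). Compactness gives a discrete spectrum $0=\lambda_0<\lambda_1<\lambda_2<\cdots\to\infty$, the bottom eigenvalue $\lambda_0=0$ being realized by the constants, so every subsequent $\lambda_i$ is strictly positive. The strict inequalities, i.e. simplicity, I would read off from the ODE itself: on the open interval the equation is a regular second-order ODE, hence each eigenspace is at most two-dimensional, and at each endpoint the degeneration of $b$ (a regular singular point, modelled by $b(t)=l^2(1-t^2)$ in the sphere case) admits only a one-dimensional space of admissible solutions, leaving a simple eigenvalue in the familiar Sturm--Liouville manner.

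Finally I would lift the solutions back to $M$. Each Sturm--Liouville eigenfunction $\varphi_i$ produces $f_i:=\varphi_i\circ f\in\mathcal H_f\cap H^1(M)$, and the pointwise identity $\Delta_g f_i=(b\varphi_i''+a\varphi_i')\circ f=-\lambda_i f_i$ (valid on the interior) shows, after checking regularity across the focal set, that $f_i$ is a weak eigenfunction of the self-adjoint operator $\Delta_g$ on $M$ tested against every $v\in C^\infty(M)$; elliptic regularity on the closed manifold then makes $f_i$ smooth, and it manifestly lies in $S_f$. This yields the required sequence. I expect the genuine obstacle to be precisely the behaviour at the two focal hypersurfaces: there $b$ vanishes and the ODE is singular, so one must single out the correct (Friedrichs) self-adjoint realization and verify that its eigenfunctions are exactly the $\varphi$ for which $\varphi\circ f$ extends smoothly across the focal set — a Neumann-type condition forced by the collapsing of the level sets. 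Controlling this endpoint analysis is what simultaneously delivers discreteness, simplicity, and the smoothness of the lifted eigenfunctions on all of $M$.
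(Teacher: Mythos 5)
Your proposal takes a genuinely different route from the paper, and the difference is exactly where the difficulty sits. The paper never descends to the interval: it works entirely upstairs on $M$, in the closed subspace $A_1=\{h\in S_f\cap H^2_1(M):\int_M h\,dvol_g=0\}$ of $H^2_1(M)$. It minimizes the Rayleigh quotient over $A_1$ (Rellich--Kondrakov on the closed manifold supplies a minimizer, and closedness of $A_1$ in $H^2_1$ keeps it in the subspace), and then uses Lemma 3.1 in one stroke: the constrained Euler--Lagrange equation says $\int_M(\Delta_g f_1+\lambda_1 f_1)\,u\,dvol_g=0$ for every $u\in A_1$, and since $\Delta_g f_1+\lambda_1 f_1$ itself lies in $A_1$ by Lemma 3.1, it must vanish; one then iterates on $A_{i+1}=A_i\cap\langle f_i\rangle^{\perp}$, with strictness of the inequalities coming from the one-dimensionality of $S_f$-eigenspaces (Proposition 3.3, proved via the ODE \emph{on the open interval only}). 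So both arguments rest on the same two ingredients --- Rellich compactness and the invariance $\Delta_g(S_f)\subset S_f$ --- but the paper deploys them where the analysis is easy and the focal varieties are never visible, whereas your Sturm--Liouville reduction puts the degenerate endpoints at center stage.

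That choice creates a genuine gap, which you flag but do not close, in two specific places. First, the parenthetical claim that $\mathcal{H}_f$ reduces $\Delta_g$ ``because $\Delta_g$ preserves the dense subspace of smooth functions in $S_f$'' is not a proof: invariance of a dense set of smooth vectors under an unbounded self-adjoint operator does not imply that the orthogonal projection onto the closed subspace commutes with its resolvent or spectral projections; for that one needs, e.g., that the fiberwise averaging projection preserves $C^\infty(M)$ or $H^1(M)$ compatibly with the Dirichlet form, and smoothness of averages across the focal varieties is precisely the kind of endpoint statement you are deferring. Second, without reduction, an eigenfunction $\varphi_i$ of the Friedrichs realization of $-\frac{1}{V}(bV\varphi')'$ only yields $f_i=\varphi_i\circ f$ satisfying the weak equation against test functions in $\mathcal{H}_f\cap H^1(M)$; to test against all $v\in C^\infty(M)$ you need either reduction or regularity of $f_i$ across the focal set, and an $H^1$ function solving an elliptic equation off a lower-dimensional set need not solve it globally (on $S^n$ the focal submanifolds have codimension at least $2$, so a capacity argument could rescue you, but the Proposition is stated for general $(M,g)$, where focal varieties can be hypersurfaces --- e.g.\ $f(\theta,\phi)=\cos\theta$ on a flat torus --- and then the choice of self-adjoint realization genuinely matters). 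So as written your text is a workable \emph{outline} of an alternative proof, correct in spirit and with the obstacle honestly identified, but the Proposition is established only modulo that endpoint analysis --- the analysis the paper's variational formulation is designed to avoid.
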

\begin{proof} The constant function $1\in S_f$  is an eigenfunction of the zero eigenvalue. 
We will use the usual arguments  with Rayleigh quotients  to characterize the following 
eigenfunctions with non zero eigenvalues in $S_f$. 
Let \begin{equation}A_1=\Big\{h\in S_f \cap  H^2_1 (M ):\ \int h \ dvol_{g}=0 \Big\},\end{equation} i.e.  the functions in  $S_f \cap  H^2_1 (M)$
orthogonal to the first eigenfunction ($H^2_1$ means of course  the Sobolev space with norm $\|u\|_1^2=\|u\|_2+\|\nabla u\|_2$).
 Consider \begin{equation}\label{1steigenvalue}\lambda_1=\inf_{h\in A_1 - \{ 0 \}}\frac{\int_{M} \|\nabla h\|^2dvol_{g}}{\int_{M}h^2dvol_{g}} .
 \end{equation}
By the usual argument using  the  Rellich-Kondrakov  theorem  we can find a minimizing sequence  $h_i \in A_1 - \{ 0 \} $ that converges to 
a minimizer $f_1 \in H^2_1 (M)$ with  $\| f_1  \|_2=1$,  $\int f_1 \  dvol_{g} =0 $ and $\|\nabla f_1  \|_2^2  =   \lambda_1$. Since $A_1$ is a closed
subspace of  $H^2_1 (M)$ it follows that $f_1 \in A_1 -\{ 0 \}$ is a minimizer and so a critical point of the functional in $A_1 -\{ 0 \}$.
This means that for all $u \in A_1$,

\begin{equation} \int_M \ (\Delta_g f_1 + \lambda_1 f_1 ) \ u  \ dvol_g =0
\end{equation} and since $\Delta_g f_1 + \lambda_1 f_1 \in A_1$ by Lemma 3.1, it follows that $f_1$ is 
eigenfunction of $\Delta_{g}$ with eigenvalue $-\lambda_1$. Then one defines $A_2 = A_1 \cap <f_1 >^{\perp}$ and
repeating the argument obtains and eigenfunction $f_2$ and so on. 

\end{proof}

The sequence $0>-\lambda_1 >-\lambda_2 >\dots $ is of course a subsequence of the spectrum of $\Delta_{g}$. One can also
see that:

\begin{Proposition} For each eigenvalue $\lambda_i$ the space of $S_f$-eigenfunctions  has  dimension one. 
\end{Proposition}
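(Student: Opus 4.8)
The plan is to reduce the eigenvalue problem inside $S_f$ to a second order ordinary differential equation on the interval of values of $f$, and then to extract simplicity from the singular behaviour of that equation at the focal hypersurfaces.

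First I would record the reduction, which is immediate from Lemma 3.1. Write $[a,b]$ for the range of $f$ and keep the notation $b(\cdot)=\|\nabla f\|^2$, $a(\cdot)=\Delta_g f$ of the transnormality equations. If $u=\varphi\circ f$ is a smooth $S_f$-eigenfunction with eigenvalue $-\lambda_i$, then Lemma 3.1 gives $\Delta_g u=\varphi''\,b(f)+\varphi'\,a(f)$, so $\varphi$ solves
\[ b(t)\,\varphi''(t)+a(t)\,\varphi'(t)+\lambda_i\,\varphi(t)=0 \]
for every $t$ in the open set of regular values $(a,b)$. On this open interval the coefficient $b(t)$ is nonzero, so the equation is a nonsingular linear second order ODE and its solution space is exactly two dimensional. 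Thus $u\mapsto\varphi$ embeds the eigenspace of $-\lambda_i$ inside $S_f$ into a two dimensional space, and the whole point is to produce one further dimension of constraint.

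That constraint must come from the endpoints $a$ and $b$, which are precisely the critical values of $f$, i.e. the values where $b(\cdot)=\|\nabla f\|^2$ vanishes. Hence each endpoint is a singular point of the ODE. Near such an endpoint $t_0$ I would compute the indicial equation: since $b$ vanishes there (to first order in the cases of interest) while $a(t_0)$ is finite, the endpoint is a regular singular point and one of the two indicial roots is $0$, the other, $\rho_2$, being determined by $a(t_0)$ and the order of vanishing of $b$. The root $0$ yields a solution extending analytically across the focal set, while the $\rho_2$-branch behaves like a power (or a logarithm) of the distance to the focal hypersurface. The crucial translation is to pass from smoothness of $u=\varphi\circ f$ \emph{on} $M$ to admissibility of $\varphi$ at $t_0$: using the local description of $f$ near its focal hypersurface, where $f$ is, up to reparametrisation, a function of the squared distance to the focal submanifold, only the $\rho=0$ branch produces a function that extends smoothly across the focal set. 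Consequently the requirement that $u$ be smooth on $M$ confines $\varphi$ to a one dimensional subspace of the two dimensional solution space.

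This gives the upper bound: any two smooth $S_f$-eigenfunctions for the same eigenvalue produce solutions of the same ODE that both lie in the one dimensional admissible subspace at a fixed focal hypersurface, hence are proportional, so the eigenspace has dimension at most one. On the other hand Proposition 3.2 provides, for each $\lambda_i$, at least one nonzero eigenfunction in $S_f$, so the dimension is at least one; combining the two bounds gives dimension exactly one. The main obstacle is the middle paragraph: making rigorous the step from smoothness of $\varphi\circ f$ on $M$ to admissibility of only the regular indicial solution, which requires the structure of $f$ near its focal hypersurfaces together with a careful indicial analysis. In the round sphere this is transparent, since by the computation in Section 2 one has $b(t)=l^2(1-t^2)$, vanishing to first order at $t=\pm1$, and near a focal set $f$ is an explicit function of the squared geodesic distance, so the singular solution fails to be smooth on $S^n$.
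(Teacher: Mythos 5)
Your proof follows the same route the paper itself indicates: the paper's entire argument for this proposition is the remark that it is ``an easy consequence of the fact that $S_f$-eigenfunctions are obtained as solutions of a second order ordinary differential equation'' (with details deferred to the round-sphere case, Lemma 3.4), and your reduction via Lemma 3.1, followed by cutting the two-dimensional interior solution space down to one dimension by regularity at the singular endpoints corresponding to the focal varieties, is exactly the mechanism behind that remark. The step you flag as the main obstacle can be closed by the computation that near a focal variety of codimension $m\geq 1$ one has (after reparametrising $f$, which does not change $S_f$) $b(t)\sim 4(t-t_0)$ and $a(t_0)=2m$, so the second indicial root equals $1-m/2\leq 1/2$: for $m=1$ the second branch behaves like the distance to the focal set (continuous but not $C^1$), for $m=2$ it carries a logarithm, and for $m\geq 3$ it is unbounded, so in every case it is excluded by smoothness of the eigenfunction, as you claim.
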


This is an easy consequence of the fact that $S_f$-eigenfunctions are obtained as solutions of a second order
ordinary differential equation. We will do it again in the case of the round sphere, which is our main interest, so we
omit the simple proof here.

\vspace{.2cm}

In the case of an isoparametric function on the round sphere one
can be much more precise. 
We look for eigenfunctions of the Laplace  operator
on $(S^n ,g_0 )$ which belong to $S_f$ for some isoparametric function
$f$. Recall that the eigenvalues of $\Delta_{S^n}$ are
$\lambda_{i,n} = -i(n+i-1)$. $f$ is an eigenfunction with eigenvalues
$\lambda_i$ if and only if $f$ is the restriction to the sphere of a
harmonic homogeneous polynomial in $\re^{n+1}$
of degree $i$.  

\begin{Lemma} Let $f:S^n \rightarrow [-1,1]$ be an isoparametric function
obtained as the restriction to $S^n$ of a solution $F$ of de Cartan-Munzner
equations. Let $l$ be the degree of $F$. Then for each $i=0,1,...$ there
is an eigenfunction $f_i \in S_f$ of the Laplacian $\Delta_{S^n}$ with eigenvalue
$\lambda_{il,n}$. The space of such eigenfunctions has dimension 1 and
is generated by $p_i \circ f$ where $p_i$ is a monic polynomial of degree
$i$ which has $i$ distinct simple roots in the interval $(-1,1)$.  Moreover,
if $\lambda_{j,n}$ is an eigenvalue of $\Delta_{S^n} |_{S_f}$ then $j=il$ for
some $i$.
\end{Lemma}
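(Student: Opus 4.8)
The plan is to reduce everything to one ordinary differential equation and to treat it as a singular Sturm--Liouville problem on $[-1,1]$ whose endpoints $\pm 1$ are the focal values of $f$. By the computation in Lemma 3.1, a function $u=\varphi\circ f$ satisfies $\Delta_{S^n}u=\mu u$ exactly when $\varphi$ solves $L\varphi:=b(t)\varphi''+a(t)\varphi'=\mu\varphi$ on $(-1,1)$, with $b(t)=l^2(1-t^2)$ and $a(t)=-l(n+l-1)t+\frac{1}{2}cl^2$. The decisive observation is that $L$ preserves polynomials of each degree: since $b$ has degree $2$ and $a$ degree $1$, expanding gives $L(t^{j})=\lambda_{jl}\,t^{j}+\frac{1}{2}cl^2 j\,t^{j-1}+l^2 j(j-1)\,t^{j-2}$, so in the monomial basis $L$ is triangular with diagonal entries $\lambda_{jl}=-jl(n+jl-1)$. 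These are pairwise distinct, so for each $i$ there is a unique monic polynomial $p_i$ of degree exactly $i$ with $Lp_i=\lambda_{il}p_i$; then $p_i\circ f$ is smooth on $S^n$ and is an eigenfunction of $\Delta_{S^n}$ with eigenvalue $\lambda_{il}$. This gives existence, the eigenvalue, the degree, and uniqueness within the polynomials.

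To upgrade uniqueness to the whole of $S_f$ I would analyze the two regular singular points $t=\pm 1$. On $(-1,1)$ the equation $L\varphi=\lambda_{il}\varphi$ has a two-dimensional solution space. A Frobenius computation at $t=1$, using $b(t)\sim 2l^2(1-t)$ together with $(l/2)(m_1+m_2)=n-1$ and $c=m_2-m_1$, yields indicial exponents $0$ and $(1-m_1)/2\le 0$; by the symmetric computation the exponents at $t=-1$ are $0$ and $(1-m_2)/2\le 0$. Hence near each endpoint the bounded solutions form a one-dimensional space. Since $p_i$ is bounded at both endpoints, it spans this space at each one, so the solutions bounded on all of $(-1,1)$ --- equivalently those for which $\varphi\circ f$ extends continuously across the focal sets --- are precisely the multiples of $p_i$. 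This yields dimension one.

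For the zeros and the converse I would rewrite $L$ in divergence form. Requiring $L\varphi=w^{-1}(wb\,\varphi')'$ forces $(wb)'=wa$, whose solution is the Jacobi weight $w(t)=(1-t)^{(m_1-1)/2}(1+t)^{(m_2-1)/2}$, with exponents $\alpha=(m_1-1)/2\ge 0$ and $\beta=(m_2-1)/2\ge 0$; in particular $w$ is integrable on $(-1,1)$. Thus $\{p_i\}$ is a sequence of orthogonal polynomials for $w$, and a degree-$i$ orthogonal polynomial has exactly $i$ simple roots in the interior $(-1,1)$, as claimed. For the converse, the coarea formula identifies the push-forward of $dvol_{g_0}$ under $f$ with a constant multiple of $w\,dt$, so $\varphi\mapsto\varphi\circ f$ is (up to scaling) an isometry from $L^2((-1,1),w\,dt)$ onto the part of $L^2(S^n)$ consisting of functions in $S_f$. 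Since polynomials are dense in $L^2(w\,dt)$, the family $\{p_i\circ f\}$ is complete among $S_f$-eigenfunctions, so the only eigenvalues occurring are the $\lambda_{il}$; hence $\lambda_j$ being an $S_f$-eigenvalue forces $j=il$.

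I expect the endpoint analysis to be the main obstacle: one must show that only a single solution of the second order equation stays bounded at each focal value, so that the requirement that $\varphi\circ f$ be smooth on $S^n$ genuinely cuts the solution space down to a line, and one must verify that the push-forward measure is exactly $w\,dt$ so that boundedness, $L^2$-membership, smoothness and completeness all correspond correctly. The remaining ingredients are the triangular linear algebra of the first step and the classical theory of orthogonal polynomials.
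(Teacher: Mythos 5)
Your proposal is correct, and apart from its first step it follows a genuinely different route from the paper. The shared part is the reduction to the singular ODE $b\varphi''+a\varphi'=\mu\varphi$ and the triangularity of this operator on the monomial basis, which is exactly how the paper produces the unique monic polynomial $p_i$ (it computes $O_{il}(t^i)=Ct^{i-1}+Dt^{i-2}$ and $O_{il}(t^j)=Et^j+Ft^{j-1}+Gt^{j-2}$ with $E\neq 0$ for $j<i$). After that, the paper stays entirely inside Sturm oscillation theory: simplicity of roots from uniqueness for second order ODEs, the count of $i$ interior roots by induction using Sturm's comparison theorem applied to $p_i$ and $p_{i+1}$ (with the endpoint slopes $p'(\pm 1)/p(\pm 1)$ forced by the singular equation at $t=\pm1$), and the final statement ($j=il$) by a two-step comparison: a nontrivial solution $\varphi$ of $O_j(\varphi)=0$ with $il<j<(i+1)l$ would have at least $i+1$ zeros, which in turn would force $p_{i+1}$ to have at least $i+2$ zeros, a contradiction. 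You replace both of these with classical orthogonal-polynomial and $L^2$ arguments: the divergence form with Jacobi weight $w(t)=(1-t)^{(m_1-1)/2}(1+t)^{(m_2-1)/2}$ yields the interior simple roots at once, and the restriction of the spectrum to the $\lambda_{il}$ follows from completeness of $\{p_i\circ f\}$ in the $L^2$-closure of $S_f$ together with orthogonality of distinct eigenspaces of $\Delta_{S^n}$. Your Frobenius analysis at $t=\pm 1$ also makes explicit a point the paper leaves essentially implicit (Proposition 3.3 is stated with proof omitted): that regularity on $S^n$ cuts the two-dimensional ODE solution space down to a line. The two obstacles you flag are real but routine: one-dimensionality of the bounded solutions at each endpoint is standard Frobenius theory (including the logarithmic case $m_1=1$ or $m_2=1$, where the indicial roots coincide), and the identification $f_*(dvol_{g_0})=C\,w\,dt$ holds because the density $\rho$ of the pushforward must satisfy the same first-order equation $(b\rho)'=a\rho$ that defines $w$ (this is just self-adjointness of $\Delta_{S^n}$ written in the $t$ variable); alternatively, the pushforward can be avoided altogether by noting via Weierstrass approximation that every continuous function in $S_f$ is a uniform limit of polynomials in $f$, hence of linear combinations of the $p_i\circ f$. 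In exchange for heavier machinery your route yields extra structure (the Jacobi weight, completeness in $L^2$), while the paper's comparison argument is more elementary and self-contained.
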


\begin{proof}  $F$ is a homogeneous polynomial of degree $l$ in
$\re^{n+1}$ which solves  the equations (\ref{munznerI}) and (\ref{munznerII}).

If $l$ is odd then $c= m_2 -m_1 =0$ and we let $U=F$. If $c \neq 0$
we let $U=F-(c/(n+1)) (x_1^2 +...+x_{n+1}^2 )^{l/2}$. Then $U$ is a harmonic homogeneous polynomial 
of degree $l$.
Let $u=U|_{S^n}$. Then $u\in S_f$ is an eigenfunction of $\Delta_{S^n}$ with eigenvalue
$\lambda_{l,n}$. And of course $u = p_1 (f)$ with $p_1 (t) = t-(c/(n+1))$. Then $u$ verifies
 $S_u = S_f$, $\Delta u=\lambda_{l,n} \  u$, ${\| \nabla u \|}^2 = {\| \nabla f \|}^2 = l^2 (1-f^2 )
 =l^2 (1-(u+c/(n+1))^2 )$.

Now a function $\alpha \circ f$ is eingenfunction of $\Delta_{S^n}$ with
eigenvalue $\lambda_{i,n}$ if and only if $\alpha$ solves the second order
ordinary differential equation

$$O_i (\alpha )= l^2 \alpha '' (t) (1-t^2) +  \alpha ' (t)  ((1/2) c l^2 -l(n+l-1) t)- \lambda_{i,n}  \alpha (t) =0 .$$

By a straightforward computation 

$$O_{il} (t^i ) = l^2 i(i-1) t^{i-2} (1-t^2 ) +i t^{i-1} ((1/2) c l^2 -l(n+l-1) t) +il(n+il-1) t^i = $$

$$= Ct^{i-1} +Dt^{i-2} $$

\noindent
for some $C,D \in \re$. Moreover for $j<i$ $O_{il} (t^j ) = Et^j + Ft^{j-1} + Gt^{j-2}$ 
for some $E, F, G \in \re$ and $E\neq 0$. We set $p_0 = 1$, $p_1 (t) = t-(c/(n+1))$
(as before) and it follows that there is exactly one
monic polynomial of degree $i$, $p_i$ which solves $O_{il} (p_i )=0$.
The fact that the roots of $p_i$ are simple is a simple consequence of the
fact that $p_i$ is a non-trivial solution of a second order ordinary differential 
equation. The fact that
it has $i$ roots in $(-1,1)$ is clear for $i=0$ and $i=1$. Assume that it is
true for some $i$. Let $-1<t_1 <...<t_i <1$ be the $i$ roots. If we call $t_0 =-1$
and $t_{i+1} =1$ then it is enough to prove that $p_{i+1}$ has at least
one root in each interval $(t_j ,t_{j+1} )$. 
This is a classical application of Sturm comparison theorem
\cite[page 229]{Ince}: we have

$$  p_i ''+   \frac{((1/2) c l^2 -l(n+l-1) t)}{l^2 (1-t^2)} p_i ' 
-\frac{ \lambda_{il,n}}{l^2 (1-t^2)} p_i =0 ,$$

$$  p_{i+1} ''+   \frac{((1/2) c l^2 -l(n+l-1) t)}{l^2 (1-t^2)} p_{i+1} ' 
-\frac{ \lambda_{(i+1)l,n}}{l^2 (1-t^2)} p_{i+1} =0 ,$$

\noindent
with $-\lambda_{(i+1)l,n} > -\lambda_{il,n}$. Also

$$\frac{p_{i+1} '(-1)}{p_{i+1} (-1)} = \frac{ \lambda_{(i+1)l}}{((1/2) c l^2 +l(n+l-1) )} <$$
$$<  \frac{ \lambda_{il}}{((1/2) c l^2 +l(n+l-1) )} =\frac{p_{i} '(-1)}{p_{i} (-1)}.$$

\noindent
Then if $p_{i+1}$ did not have a 0 between $-1 = t_0$ and $t_1$ we would apply Sturm's Theorem to reach
a contradiction. In a similar way one checks that $p_{i+1}$ must have a zero in each of the
other intervals $(t_j , t_{j+1} )$; and this completes the proof that $p_i$ has exactly $i$
zeros in the interval $(-1,1)$.

To prove the last statement in the lemma, pick $j$, $il<j<(i+1)l$. Let $\varphi $ be a non-trivial
solution of $O_{j} (\varphi )=0$. Then applying the same Sturm comparison to $p_i$ and
$\varphi$ would prove that $\varphi$ has at least $i+1$ zeros. And then applying the same
Sturm comparison to $\varphi$ and $p_{i+1}$ would prove that $p_{i+1}$ has at least 
$i+2$ zeros, which is of course false.

\end{proof}

\section{Proof of Theorem 1.1}

We will use bifurcation theory to prove Theorem 1.1. We will use the notation in
\cite{Nirenberg}. This is the same analysis carried out in \cite{Jin} to study
solutions which are radial with respect to some axis, but we will see that it
works in this more general case. The proof is based on Krasnoselski's Theorem
and the global extension by P. Rabinowitz; this corresponds with sections 3.3 and
3.4 of \cite{Nirenberg}. 

To begin with let us recall the following

\begin{Definition} 
Given a Banach space $X$  and a $C^r$ map $H:X\times \re \rightarrow
X$ a point $(0,\lambda_0  )$ such that $H(0,\lambda_0 )=0$ is called a
{\it bifurcation point} if every neighborhood of $(0,\lambda_0 )$ contains points
$(x,\lambda )$ with $x\neq 0$ such that $H(x,\lambda )=0$. 
\end{Definition}

Our first task is to write our equation as an operator equation as in 
bifurcation theory. Fix $q$, $1<q<p_n -1$. 
Given a positive solution $u:S^n \rightarrow \re_{>0}$ of  equation (1)

$$-\Delta u +\lambda u = \lambda u^q$$

\noindent
we let $v = u-1$; then $v>-1$ and it is a solution of $Eq_{\lambda}$:

$$-\Delta v = \lambda ((v+1)^q - (v+1)).$$

Now fix an isoparametric function $f:S^n \rightarrow [-1,1]$, where $f = F|_{S^n}$  for
 a homogeneous polynomial $F$ of degree $l$ satisfying the
Cartan-M\"unzner equations. And consider the
Banach space

$$C^{2,\alpha} (S_f ) = C^{2,\alpha } (S^n ) \cap S_f .$$ 

Let $T:C^{2,\alpha} (S_f ) \rightarrow C^{2,\alpha} (S_f )$ be the inverse
of $-\Delta + Id : C^{4,\alpha} (S_f )\rightarrow  C^{2,\alpha} (S_f )$; $T$ is a linear compact map. 
Consider the region

$$A=\{ (v,\mu )\in C^{2,\alpha} (S_f )  \times \re : \mu >1 , v>-1 \} ,$$

\noindent
relate  $\lambda$ and $\mu$ by the equation $\mu = (q-1) \lambda +1$ 
and define $g:A \rightarrow C^{2,\alpha} (S_f )$ by

$$g(v,\mu )= \lambda T((v+1)^q -qv -1) .$$

Then $g$ is a non-linear compact map.  Then define   $H:A \rightarrow C^{2,\alpha} (S_f )$ by

$$H(v,\mu )= v-\mu T(v) -g(v,\mu ).$$

Then  $v$ is a solution of $Eq_{\lambda}$ if and only if $H(v,\mu )=0$ (simply
apply $-\Delta +Id$ to this equation). Of course
one always has the trivial solution $v=0$, for any $\mu$. Moreover, $H$ satisfies the 
conditions to apply Krasnoselski's Theorem:  $g(v, \mu ) =o( \| v \| )$ uniformly on
$| \mu | < \epsilon$ (for any constant $\epsilon$). Then one notes that $1/\alpha$ is an eigenvalue of
$T$ if and only if $1-\alpha$ is an eigenvalue of $\Delta |_{ S_f}$, with the same eigenspaces.
Therefore it follows from Lemma 3.4 and Krasnoselski's Theorem \cite[Theorem 3.3.1]{Nirenberg}
that the points  $ (0,\mu_i ) $ with
$\mu_i = 1+il(n+il-1)$ are bifurcation points (and the only ones).

\vspace{.3cm}

To prove Theorem 1.1 we apply Rabinowitz' Theorem \cite[Theorem 3.4.1]{Nirenberg}.

We will prove

(*) If $C$ is the closure of the non-trivial ($v\neq 0$) solutions of $H(v,\mu )=0$ and $C_i$ is the
connected component of $C$ containing $(0,\mu_i )$ then $C_i$ does not contain any other 
of the bifurcation points. 

\vspace{.2cm}

Assuming (*) it follows from Rabinowitz' Theorem that $C_i$ is not compact in $A$. In order
to finish the proof of Theorem 1.1 it is enough to show that for any $\mu >\mu_i $ there 
exists $v\neq 0$ such that  $(v,\mu ) \in C_i$. 
If this were not the case then $C_i$ would be
contained in a region of the form $A \cap \left( C^{2,\alpha} (S_f ) \times [n/(q-1),D] \right) $ 
(there are no 
non-trivial solutions
of $Eq_{\lambda}$ for $\lambda \leq n/(q-1) $, \cite[Theorem 6.1]{Veron} ). 
But for uniformly bounded $\mu$, for all solutions of $H(v,\mu )=0$, $v$ is uniformly 
bounded above and bounded below away from $-1$ by \cite[Lemma 2.4 (b)]{Jin}. This would
imply that $C_i$ is compact and the proof follows.

\vspace{.2cm}

To prove (*) first note that the bifurcation points are the only points in $C$ with $v=0$. Next 
for any $v\in S_f$ let $z(v)$ be the number of  zeros of $\varphi$ where $v=\varphi \circ f$
(it might be $\infty$). Then
for each positive integer $k$ consider the set $B_k = \{ (v,\mu ) \in S_f  :
H(v, \mu )=0 , z(v)=k \}$. Note that if $H(v, \mu ) =0$  then $u=v+1 = \varphi \circ f$ where
$\varphi$ solves the ordinary differential equation (where $\lambda = \lambda (\mu )$)

$$-\left( l^2 \varphi '' (t) (1-t^2) +  \varphi ' (t)  ((1/2) c l^2 -l(n+l-1) t) \right) + \lambda \varphi = \lambda \varphi^q .$$

\noindent
If $v\neq 0$ then $u\neq 1$ and $u$ must have a minimum which must be $<1$ and a maximum $>1$ and 
therefore it must take the value 1 a positive finite number of times with non-zero derivative.  This implies:

i) All non-trivial solutions of $H(v,\mu )=0$ belong to $B_k$ for some positive integer $k$.

ii) Each $B_k$ is open in $C$.

\noindent
Also by the local bifurcation theorem of Crandall and Rabinowitz \cite[Theorem 3.2.2]{Nirenberg} the
non-trival solutions near the bifurcation point $(0,\mu_k )$ can be parametrized by $s$, $0< |s| < \varepsilon$,
by $(sp_k + o(s^2) , \lambda (s))$, which implies by Lemma 3.4 that non-trivial solutions near $(0,\mu_k )$
belong to $B_k$. Then define $C^0_i = (C_i \cap B_i ) \cup \{(0,\mu_i )\} $. If we prove that $C^0_i = C_i$
we are done; and to prove this it is enough to show that $C^0_i$ is open and closed in $C_i$, but this 
follows easily from the previous lines. 

We have therefore completed the proof of Theorem 1.1.

\qed

\section{Proof of Theorem 1.4}

Let $(M,g)$ and $(N,h)$ be closed Riemannian manifolds of constant scalar curvature with $\dim M=n$ and $\dim N=k$.  Let us denote with 
${\bf s}=s_g+s_h$ the scalar curvature of $(M \times N,g + h )$.  Let $f:M\rightarrow \re$ be an isoparametric function such that
the isoparametric hypersurface $S$ is a regular level set of $f$. Let $V=Vol(N,h)$ and
consider the Yamabe functional on $(M \times N, g + h )$ and  restrict it to $S_f$:

$$Y (u) =  \frac{a_{n+k} \|\nabla u \|_2^2 + {\bf s}  \|u\|_2^2}{\|u\|_p^2} \ V^{p-2/p} =\frac{Q(u)}{\|u\|_p^2}  \ V^{p-2/p} ,$$

with $2<p=p_{n+k}<p_{n}$. If $u,v \in S_f$ then

$$\frac{d Y(u+tv)}{dt} (0) = \frac{2 V^{p-2/p} }{\|u\|^2_p}\int\Big( - a_{n+k}\Delta_g u+{\bf s}  u-\|u\|_p^{-p}u^{p-1}Q(u)\Big)\ v \ dvol_g.$$

It follows from Lemma 3.1 that  $u\in S_f$ is a critical point of $Y|_{S_f}$ if and only if $u$ if a solution of the  
Yamabe equation $-a_{n+k}\Delta_gu +{\bf s}  u=\beta u^{p-1}$ with $\beta=\|u\|_p^{-p+2}Y(u)$. From now on we write $a_{n+k}$ as $a$.

\begin{Lemma}\label{existsuthatminimize} There is a smooth positive function $u\in S_f$ which minimizes
the Yamabe  functional restricted to $S_f$.
\end{Lemma}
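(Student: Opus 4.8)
The plan is to minimize the restricted Yamabe functional $Y|_{S_f}$ by the direct method, exploiting the fact that the subcritical exponent $p = p_{n+k} < p_n$ makes the relevant Sobolev embedding compact. First I would pass to the variational problem on the appropriate Sobolev space: set $\mathcal{H} = H^2_1(M) \cap S_f$ (the closure in $H^2_1$ of the smooth functions constant along level sets of $f$), which is a closed subspace of $H^2_1(M)$ and hence a Hilbert space. Restricting $Y$ to the unit sphere $\{ u \in \mathcal{H} : \|u\|_p = 1\}$, I would take a minimizing sequence $u_j$ for the numerator $Q(u) = a\,\|\nabla u\|_2^2 + \mathbf{s}\,\|u\|_2^2$. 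Replacing $u_j$ by $|u_j|$ does not increase $Q$ (it preserves $\|\nabla u_j\|_2$ and leaves $\|u_j\|_2, \|u_j\|_p$ unchanged) and keeps the function in $S_f$, so I may assume $u_j \geq 0$.

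Next I would establish that the minimizing sequence is bounded in $\mathcal{H}$ and extract a weak limit. Boundedness of $\|\nabla u_j\|_2$ is immediate once $Q(u_j)$ is bounded, provided the infimum is finite and the quadratic form is coercive on the constraint set; here one uses that $\mathbf{s}$ is constant so $Q(u) \geq a\,\|\nabla u\|_2^2 - |\mathbf{s}|\,\|u\|_2^2$, and controls the lower-order term via the normalization $\|u_j\|_p = 1$ together with interpolation of $\|\cdot\|_2$ between $\|\cdot\|_1$-type norms and $\|\cdot\|_p$. By reflexivity of $\mathcal{H}$ I pass to a subsequence $u_j \rightharpoonup u$ weakly in $H^2_1$. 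The key compactness input is that $p < p_n = 2n/(n-2) \leq 2(n+k)/(n+k-2)$, so the embedding $H^2_1(M) \hookrightarrow L^p(M)$ is compact by Rellich–Kondrakov; hence $u_j \to u$ strongly in $L^p$, giving $\|u\|_p = 1$, and in particular $u \not\equiv 0$ and $u \geq 0$. Weak lower semicontinuity of the $H^2_1$-norm then yields $Q(u) \leq \liminf Q(u_j)$, so $u$ is a minimizer.

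Because $\mathcal{H}$ is a closed subspace of $H^2_1$, the weak limit $u$ lies in $S_f$ automatically, and the computation of $dY(u+tv)/dt|_{0}$ recorded just before the lemma shows that $u$ satisfies the restricted Euler–Lagrange equation weakly for all test functions $v \in S_f$; invoking Lemma~3.1 (so that $\Delta_g u \in S_f$ whenever $u \in S_f$ is smooth, and more relevantly that the Euler–Lagrange operator preserves $S_f$) promotes this to the full Yamabe equation $-a\,\Delta_g u + \mathbf{s}\,u = \beta u^{p-1}$. It remains to upgrade regularity and positivity: standard elliptic bootstrapping applied to the subcritical equation gives $u \in C^{2,\alpha}$ and then smooth, while the strong maximum principle (using $u \geq 0$, $u \not\equiv 0$, and writing the equation as $-a\,\Delta_g u + (\mathbf{s} - \beta u^{p-2})u = 0$) forces $u > 0$ everywhere.

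The main obstacle I anticipate is the coercivity/boundedness step. The quadratic form $Q$ need not be positive definite, since $\mathbf{s}$ can be negative, so I cannot directly read off an $H^2_1$-bound from $Q(u_j)$ alone. The remedy is to observe that on the constraint $\|u\|_p = 1$ one controls $\|u\|_2$ by $\|u\|_p$ on the compact manifold $M$ (as $p > 2$, $\|u\|_2 \leq \Vol(M)^{(p-2)/(2p)}\|u\|_p$), so the term $\mathbf{s}\,\|u\|_2^2$ is uniformly bounded and $a\,\|\nabla u_j\|_2^2 = Q(u_j) - \mathbf{s}\,\|u_j\|_2^2$ stays bounded; this pins down the $H^2_1$-bound and also shows the infimum is finite. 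One should note that positivity of the eigenvalue threshold is not needed for mere existence of the minimizer — that hypothesis on $\mathbf{s}$ in Theorem~1.2 enters only later, to guarantee the resulting solution is \emph{non-constant} — so for this lemma the existence and smoothness argument goes through unconditionally.
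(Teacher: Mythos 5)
Your proposal is correct and takes essentially the same route as the paper: a direct-method minimization over the closed subspace $H^2_1(M)\cap S_f$, compactness of $H^2_1(M)\hookrightarrow L^p(M)$ coming from the subcritical exponent $p=p_{n+k}<p_n$, and the pre-lemma Euler--Lagrange computation together with Lemma 3.1 to upgrade the constrained minimizer to a smooth positive solution of the Yamabe equation --- precisely the steps the paper compresses by citing \cite[Proposition 2.2]{Akutagawa} and the closedness of $H^2_1\cap S_f$. The only slip is the chain $p<p_n=2n/(n-2)\leq 2(n+k)/((n+k)-2)$, whose last inequality points the wrong way (in fact $p_n\geq p_{n+k}$, since $m\mapsto 2m/(m-2)$ is decreasing); the fact you actually use, $p<p_n$, is correct.
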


\begin{proof}  Let $u_i \in H^2_1 (M ) \cap S_f$ be  a sequence of smooth non-negative functions  such that
$$\lim_{i\to\infty}Y(u_i)={\inf}_{v\in S_f}Y(v)=\alpha.$$

Assume $\|u_i\|_p=1$.   Since $p>2$ and $p<p_n$ one sees that
 that the sequence $u_i$ is a bounded sequence in $H^2_1$ and apply the Rellich-Kondrakov theorem to see
 that there is a  subsequence of $u_i$  that converges in $L_p$ to a function $u$. Then necessarily $u$ is a
 smooth positive solution of the Yamabe equation. This was actually carried out in  \cite[Proposition 2.2]{Akutagawa}. 
 But $u\in S_f$ because $H^2_1 (M) \cap S_f$ is a closed subset of $H^2_1 (M )$.
 
\end{proof}

\begin{Lemma}\label{1notminimize}  Assume that there is a non-constant  eigenfunction of the Laplacian
$w\in S_f$ with $\Delta w = \mu  w$. If ${\bf s}  > \frac{-a \mu}{p-2} $ then the constant
function is not the minimizer of the Yamabe functional  restricted to  $S_f$.

\end{Lemma}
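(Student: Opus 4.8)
The plan is to show that the constant function fails to be even a \emph{local} minimizer of $Y|_{S_f}$ by computing the second variation of $Y$ in the direction of the eigenfunction $w$ and checking that it is strictly negative precisely under the stated hypothesis. Since $w$ is a non-constant eigenfunction of $\Delta_g$ with eigenvalue $\mu \neq 0$, integrating $\Delta_g w = \mu w$ over $M$ gives $\int_M w\, dvol_g = 0$, and integration by parts gives $\|\nabla w\|_2^2 = -\mu \|w\|_2^2$. Because $Y$ is invariant under scaling, it suffices to work with the constant $u\equiv 1$ and the one-parameter family $u_t = 1 + tw$, which lies in $S_f$ since both $1$ and $w$ do.

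First I would expand the numerator $Q(u_t) = a\|\nabla u_t\|_2^2 + {\bf s}\,\|u_t\|_2^2$. As $\nabla u_t = t\,\nabla w$ and $\int_M w\,dvol_g = 0$, this is in fact exact:
$$Q(u_t) = {\bf s}\,\mathrm{Vol}(M,g) + (-a\mu + {\bf s})\,\|w\|_2^2\, t^2 .$$
The main work is the denominator $\|u_t\|_p^2$. Expanding $(1+tw)^p = 1 + ptw + \frac{p(p-1)}{2}t^2 w^2 + O(t^3)$ and integrating (the linear term drops because $\int_M w\,dvol_g = 0$) gives $\int_M (1+tw)^p \, dvol_g = \mathrm{Vol}(M,g) + \frac{p(p-1)}{2}\|w\|_2^2\,t^2 + O(t^3)$; raising this to the power $2/p$ then yields
$$\|u_t\|_p^2 = \mathrm{Vol}(M,g)^{2/p} + (p-1)\,\mathrm{Vol}(M,g)^{2/p-1}\|w\|_2^2\, t^2 + O(t^3).$$

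Substituting both expansions into $Y(u_t) = V^{2/p}\,Q(u_t)/\|u_t\|_p^2$ and expanding the quotient to order $t^2$, the linear term vanishes (as it must, since the first-variation formula together with $\int_M w\,dvol_g = 0$ shows the constant is a critical point), and a short computation shows that the coefficient of $t^2$ has the same sign as
$$-a\mu - {\bf s}\,(p-2).$$
Using $p-2>0$, this is strictly negative exactly when ${\bf s} > -a\mu/(p-2)$. Hence under the hypothesis $Y(u_t) < Y(1)$ for all small $t\neq 0$, so the constant function is not the minimizer of $Y|_{S_f}$. I expect the only delicate point to be the bookkeeping in the second-order expansion of $\|u_t\|_p^2$ and of the quotient; the vanishing of every first-order term via $\int_M w\,dvol_g = 0$ is what makes the $t^2$ coefficient clean, and the rest is direct substitution.
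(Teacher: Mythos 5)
Your proposal is correct and takes essentially the same approach as the paper: both consider the family $u_t = 1+tw$ and show that the second-order coefficient of $Y(u_t)$ at $t=0$ is a positive multiple of $(2-p){\bf s} - a\mu$, hence strictly negative under the hypothesis, so the constant cannot be a (even local) minimizer. The paper simply states the value of $\frac{d^2}{dt^2}Y(1+tw)\big|_{t=0}$ while you carry out the Taylor expansions of the numerator and of $\|u_t\|_p^2$ explicitly, but the argument is the same.
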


\begin{proof}  Let $y(t)= Y(1+tw)$. Then $y'(0)=0$ (this is essentially the fact that $g+h$ has constant scalar curvature).
And

 $$\frac{d^2 y}{dt^2} (0) =
2(Vol (M\times N ,g+h))^{-\frac{2}{p}}\Big[(2-p){\bf s} -\mu.a\Big]\int w^2 dvol_g.$$

It follows from the hypothesis that the expression  above is negative, which  proves the Lemma.

\end{proof}

\begin{proof}[Proof of Theorem 1.4] We are considering 
 an isoparametric function  $f:M\longrightarrow \re$ such that $S$ is one of its regular level sets
 and  $\lambda_0 <0$ is the first negative eigenvalue of $\Delta_g|_{S_f}$ (see Proposition 3.2).  Then the previous Lemma 
 tells us that (under the conditions of Theorem 1.4) the constant functions do not minimize the Yamabe functional  restricted to  $S_f$.
 Hence Lemma \ref{existsuthatminimize} says that there exists  a positive non-constant  function $u\in S_f$ which  minimizes $Y$
 in $S_f$. 
 Therefore $u$, seen as a function from $M\times N$ to $\re_{>0}$,  is a solution of the Yamabe equation in 
 $(M\times N,g+h)$ and $S$ is included in a level set of $u$.

\end{proof}


\begin{thebibliography}{aa}


\bibitem{abresch} U. Abresch, {\it Isoparametric hypersurfaces with four or six distinct principal curvatures, Math. Ann}, {\bf 264} (1983), 283-302.



\bibitem{Akutagawa} K. Akutagawa, L. Florit, J. Petean, {\it  On
Yamabe constants of Riemannian products}, Comm. Anal. Geom.
{\bf 15} (2007), 947-969.


\bibitem{Aubin} T. Aubin, {\it Equations differentielles non-lineaires et
probleme de Yamabe concernant la courbure scalaire},
J. Math. Pures Appl. {\bf 55} (1976), 269-296.

\bibitem{Veron} M-F. Bidaut-Veron, L. Veron, {\it Nonlinear elliptic equations on compact
Riemannian manifolds and asymptotics of Emden equations}, Invent. math. {\bf 106} (1991),
489-539.


\bibitem{Brendle} S. Brendle, {\it Blow-up phenomena for the Yamabe equation},
Journal Amer. Math. Soc. {\bf 21} (2008), 951-979.


\bibitem{Cartan} E. Cartan, {\it Familles de surfaces isoperimetriques dans les
espaces  a courbure constante}, Ann. Mat. Pura Appl. {\bf 17} (1938), 177-191.

\bibitem{Cecil} T. Cecil, Q. S. Chi, G. Jensen, {\it Isoparametric hypersurfaces with 
four different principal curvatures}, Ann. Math. {\bf 166} (2007), 1-76.


\bibitem{Cecil2} T. Cecil, {\it Isoparametric and Dupin  hypersurfaces}, Symmetry, Integrability and Geometry: Methods and Applications, SIGMA {\bf 4} (2008), 062, 1-28.

\bibitem{Ferus} D. Ferus, H. Karcher, H. F. M\"unzner, {\it Cliffordalgebren und neue isoparametrische Hyperflachen},
Math. Z. {\bf 177} (1981), 479-502.

\bibitem{Lawson} W. Y. Hsiang, H. B. Lawson, {\it Minimal submanifolds of low
cohomogeneity}, J. Differential Geometry {\bf 5} (1971), 1-38.

\bibitem{Immervoll} S. Immervoll, {\it The classification of isoparametric hypersurfaces
with four distinct principal curvatures}, Ann. Math. {\bf 168} (2008), 1011-1024. 


\bibitem{Ince} E. L. Ince, {\it Ordinary Differential Equations}, Dover
Publications, New York, 1956.

\bibitem{Jin} Q. Jin, Y. Y. Li, H. Xu, {\it Symmetry and asymmetry: the method of moving spheres}, Advances in
Differential Equations {\bf 13} (2008), 601-640.


\bibitem{Kobayashi} O. Kobayashi, {\it On the large scalar curvature},
Research Report 11, Dept. Math. Keio Univ. (1985).


\bibitem{Kobayashi2} O. Kobayashi, {\it Scalar curvature of a metric with
unit volume}, Math. Ann {\bf 279} (1987), 253-265.

 
\bibitem{Levi-Civita} T. Levi-Civita, {\it Famiglie di superficie isoparametrische nell'ordinario
spacio euclideo}, Atti. Accad. naz. Lincei. Rend. Cl. Sci. Fis. Mat. Natur {\bf 26} (1937), 355-362.
 
 

\bibitem{Miyaoka} R. Miyaoka, {\it The Dorfmeister-Neher theorem on isoparametric hypersurfaces},
Osaka J. Math. {\bf 46} (2009), 695-715.

\bibitem{Munzner} H. F. M\"unzner, {\it Isoparametrische Hyperflachen in spharen I},
Math. Ann. {\bf 251} (1980), 57-71.

\bibitem{Munzner2} H. F. M\"unzner, {\it Isoparametrische Hyperflachen in spharen II},
Math. Ann. {\bf 256} (1981), 215-232.




\bibitem{Nirenberg} L. Nirenberg, {\it Topics in nonlinear functional analysis},
New York University Lecture Notes, New York, 1974.


\bibitem{Nomizu} K. Nomizu, { \it Some results in E. Cartan's theory of isoparametric families of hypersurfaces}, Bulletin of the American Mathematical Society, {\bf{79}}, (1973) 6:1184-1188.



\bibitem{Obata} M. Obata, {\it The conjectures on conformal transformations
of Riemannian manifolds}, J. Diff. Geom. {\bf 6} (1971), 247-258.



\bibitem{Ozeki-Takeuchi-I}  H. Ozeki, M. Takeuchi,{ \it On some types of isoparametrics hypersurfeces in sphere I}, Tohoku Math. Journ. {\bf{27}}, (1975), 515-559.


\bibitem{Ozeki-Takeuchi-II}  H. Ozeki, M. Takeuchi,{ \it On some types of isoparametrics hypersurfeces in sphere II}, Tohoku Math. Journ. {\bf{28}}, (1976), 7-55.

\bibitem{Petean}  J. Petean, { \it Metrics of constant scalar curvature conformal to Riemannian products}, 
Proceedings of the American Mathematical Society  {\bf{138}}  (2010), 2897-2905.


\bibitem{Schoen} R. Schoen, {\it Conformal deformation of a Riemannian metric
to constant scalar curvature}, J. Differential Geometry {\bf 20} (1984),
479-495.

\bibitem{Schoen2} R. Schoen, {\it Variational theory for the total
scalar curvature functional for Riemannian metrics and related
topics}, Lectures Notes in Math. 1365, Springer-Verlag, Berlin,
1989, 120-154.

 
\bibitem{Segre} B. Segre, {\it Famiglie di ipersuperficie superficie isoparametrische negli spazi euclidei ad un 
qualunque numero di demesioni}, Atti. Accad. naz. Lincei. Rend. Cl. Sci. Fis. Mat. Natur {\bf 27} (1938), 203-207.
 

\bibitem{Stolz} S. Stolz {\it Multiplicities of Dupin Hypersurfaces} Invent. Math. {\bf 138}, (1999), 253-279. 



\bibitem{Trudinger} N. Trudinger, {\it Remarks concerning the conformal
deformation of Riemannian structures on compact manifolds}, Ann. Scuola
Norm. Sup. Pisa {\bf 22} (1968), 265-274.

\bibitem{Wang} Q. M. Wang, { \it Isoparametric functions on Riemannian Manifolds. I}, Math. Ann. {\bf{277}}, (1987), 639-646.



\bibitem{Yamabe} H. Yamabe, {\it On a deformation of Riemannian structures
on compact manifolds}, Osaka Math. J. {\bf 12} (1960), 21-37.


\end{thebibliography}
\end{document}